\DeclareMathOperator{\rank}{rank}
\DeclareMathOperator{\corank}{corank}
\DeclareMathOperator{\codim}{codim}
\DeclareMathOperator{\coker}{coker}
\DeclareMathOperator{\img}{Im}
\newcommand{\bigzerol}{\smash{\lower1.0ex\hbox{\bg 0}}}
\newcommand{\Crit}{\mathrm{Crit}}
\newcommand{\N}{\mathbb{N}}
\newcommand{\R}{\mathbb{R}}
\newcommand{\gen}[1]{\mleft\langle#1\mright\rangle}
\newcommand{\prn}[1]{\mleft(#1\mright)}
\newcommand{\norm}[1]{\mleft\|#1\mright\|}
\newfont{\bg}{cmr9 scaled\magstep4}
\theoremstyle{plain}
\newtheorem{theorem}{Theorem}[section]
\newtheorem{lemma}[theorem]{Lemma}
\newtheorem{proposition}[theorem]{Proposition}
\theoremstyle{definition}
\newtheorem{definition}[theorem]{Definition}
\newtheorem{example}[theorem]{Example}
\newtheorem{remark}[theorem]{Remark}
\newcommand{\TheTitle}{Topology of Pareto sets of strongly convex problems}
\title{{\TheTitle}}
\author[N.~Hamada]{Naoki Hamada}
\address{Artificial Intelligence Laboratory, Fujitsu Laboratories Ltd., Kawasaki, 211-8588, Japan;
  RIKEN AIP Fujitsu Collaboration Center, RIKEN Center for Advanced Intelligence Project, Tokyo, 103-0027, Japan.} \email{hamada-naoki@fujitsu.com}
\author[K.~Hayano]{Kenta Hayano}
\address{Department of Mathematics Faculty of Science and Technology, Keio University, Yokohama, 223-8522, Japan; 
Mathematical Science Team, RIKEN Center for Advanced Intelligence Project, Tokyo, 103-0027, Japan.}\email{k-hayano@math.keio.ac.jp}
\author[S.~Ichiki]{Shunsuke Ichiki}
\address{Institute of Mathematics for Industry, Kyushu University, Fukuoka, 819-0395, Japan.} \email{s-ichiki@imi.kyushu-u.ac.jp}
  \author[Y.~Kabata]{Yutaro Kabata}
\address{Institute of Mathematics for Industry, Kyushu University, Fukuoka, 819-0395, Japan.}\email{kabata@imi.kyushu-u.ac.jp}
 \author[H.~Teramoto]{Hiroshi Teramoto}
\address{Molecule \& Life Nonlinear Sciences Laboratory, Research Institute for Electronic Science, Hokkaido University, Sapporo 001-0020, Japan; JST, PRESTO, Department of Research Promotion, Tokyo, 102-0076, Japan.}\email{teramoto@es.hokudai.ac.jp}
\begin{document}

\maketitle

\begin{center}
\small\emph{Dedicated to Professor Takashi Nishimura on the occasion of his 60th birthday}    
\end{center}

\begin{abstract}
A multiobjective optimization problem is simplicial if the Pareto set and front are homeomorphic to a simplex and, under the homeomorphisms, each face of the simplex corresponds to the Pareto set and front of a subproblem.
In this paper, we show that strongly convex problems are simplicial under a mild assumption on the ranks of the differentials of the objective mappings.
We further prove that one can make any strongly convex problem satisfy the assumption by a generic linear perturbation, provided that the dimension of the source is sufficiently larger than that of the target.
We demonstrate that the location problems, a biological modeling, and the ridge regression can be reduced to multiobjective strongly convex problems via appropriate transformations preserving the Pareto ordering and the topology.
\end{abstract}



\section{Introduction}\label{sec:introduction}
Multiobjective optimization arises in various fields of science and engineering (e.g., data mining \cite{Mukhopadhyay2014a,Mukhopadhyay2014b}, finance \cite{Ponsich2013}, car design \cite{Tayarani2015}, and more \cite{Zhou2011}).
A common scenario in the classical decision making is that users first specify their preference, then scalarize objective functions according to the preference, and finally solve a scalarized problem to find the preferred Pareto solution \cite{Miettinen1999,Ehrgott2005}.
In contrast to this a priori approach, the recent computational power enables us to take an a posteriori approach: users first solve a multiobjective problem to obtain the whole Pareto set (or an approximation of it), then consider the trade-off between objective functions, and finally make a choice of the preferred solution.
In the a posteriori approach, some multiobjective solvers utilize scalarization for guiding search directions in order to obtain the entire Pareto set (see for example \cite{Hillermeier2001,Zhang2007,Eichfelder2008,Deb2014}).

In general it becomes easier to obtain the whole Pareto set if it has a simple topological structure.
For example, applying a generalized homotopy method \cite{Hillermeier2001} one can find the entire \emph{connected} Pareto set from a single Pareto solution.
Furthermore, we can efficiently compute a parametric-surface approximation of the entire Pareto set, provided that the problem is \emph{simplicial}~\cite{Kobayashi2019}.
\Cref{fig:simplicial-problem} depicts an example of a simplicial problem with the three objective functions $f_1, f_2, f_3$.
As this figure indicates, there exists a homeomorphism $\Phi$ from a simplex $\Delta^2$ to the Pareto set, sending each face of $\Delta^2$ to the Pareto set of a subproblem (for the precise definition of simplicial problems, see \cref{sec:simplicial problem}).
The property that $\Phi$ sends the faces to the Pareto sets has a great utility: it is known that using this correspondence, we can reduce the number of solutions to obtain an approximation of the mapping $\Phi$, i.e., a parametric-surface approximation of the Pareto set (cf.~\cite{Kobayashi2019,Tanaka2019}).

In the literature, some researchers also studied hierarchical structures of solution sets of various types.
Shoval et al.\ pointed out that when minimizing distances from given points, one can decompose the boundary of the Pareto set into the Pareto sets of subproblems \cite{Shoval2012}.
Mainly based on Wan's results \cite{Wan1975,Wan1977}, Lovison and Pacci showed a generic property of mappings that the local Pareto set admits a Whitney stratification \cite{Lovison2014}.
Gebken et al.\ investigated a situation that the Pareto critical set can be determined by a reduced number of objectives when the number of objectives are greater than that of variables~\cite{Gebken2019}.
Compared to the above studies, the theory of simplicial problems uniquely provides efficient computation of the whole Pareto set of some class of mappings from high-dimensional decision spaces to low-dimensional objective spaces, which are frequently seen in practical problems.

It is therefore important to investigate what kinds of known problem classes, such as linear problems, convex problems\footnote{While it has been stated that \emph{any} convex problem is simplicial in the literature, it is too optimistic to expect so, as one can easily find a counterexample.
See e.g.~\cref{S:example1} and \cref{R:assumption dimensions}.}, etc., are simplicial.
In this paper, we will show that problems minimizing strongly convex mappings are simplicial under the assumption on the differentials of the mappings.
\begin{figure}[tbp]
    \centering
    \includegraphics[width=\textwidth]{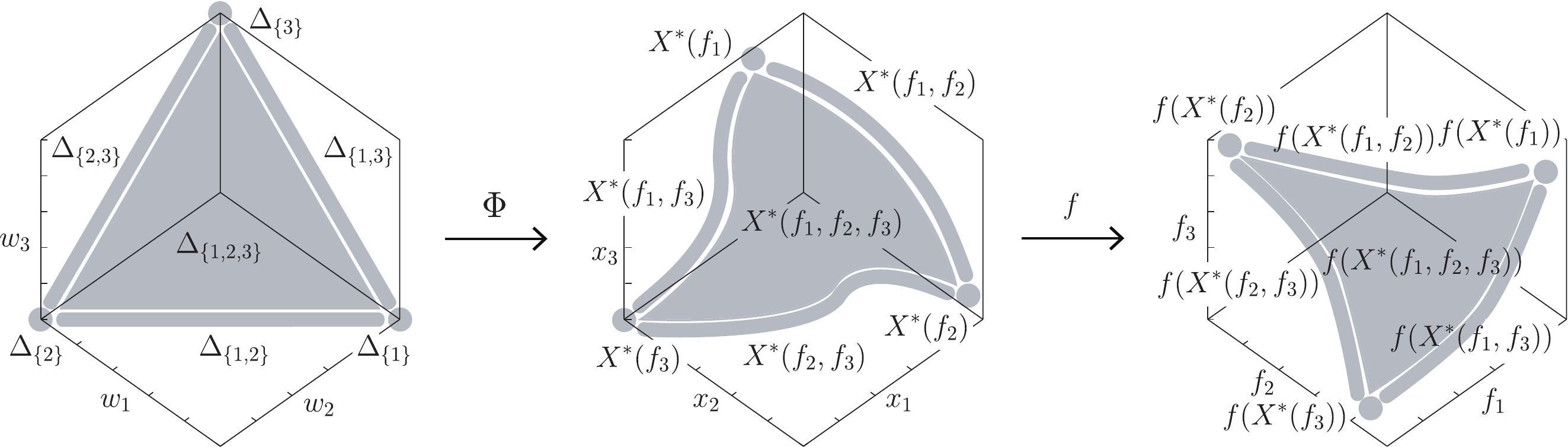}
    \caption{The Pareto set $X^*(f)$ (middle) and front $f(X^*(f))$ (right) of a simplicial problem $f = (f_1, f_2, f_3)$.
    There exists a homeomorphism $\Phi$ from a simplex $\Delta^2$ to $X^*(f)$, sending each face to the Pareto set of a subproblem. The composite $f \circ \Phi: \Delta^2 \to f(X^*(f))$ is also a homeomorphism.}
    \label{fig:simplicial-problem}
\end{figure}

\begin{theorem}\label{T:main theorem}
Let $f: \R^n \to \R^m$ be a strongly convex $C^r$--mapping $(2 \le r \le \infty)$.
The multiobjective optimization problem minimizing $f$ is $C^{r - 1}$--weakly simplicial.
Furthermore, this problem is $C^{r - 1}$--simplicial if the corank of the differential $d f_x$ is equal to $1$ for any $x \in X^*(f)$.
\end{theorem}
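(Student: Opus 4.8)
The plan is to realize the Pareto set as the image of a single scalarization map on the \emph{weight simplex} and to read off all the required structure from that map. Write $\Delta^{m-1}=\{w\in\R^m : w_i\ge 0,\ \sum_i w_i=1\}$ and, for $w\in\Delta^{m-1}$, set $g_w=\sum_{i=1}^m w_i f_i$. Since each $f_i$ is strongly convex with some modulus $\mu_i>0$, the combination satisfies $\nabla^2 g_w=\sum_i w_i\nabla^2 f_i\succeq(\min_i\mu_i)\,I\succ 0$ on the whole closed simplex; in particular $g_w$ is coercive and strongly convex, so it has a unique global minimizer. I would therefore define the \emph{solution map}
\[
\Phi\colon\Delta^{m-1}\to\R^n,\qquad \Phi(w)=\arg\min_{x\in\R^n}g_w(x),
\]
and observe that for $\emptyset\ne I\subseteq\{1,\dots,m\}$ the face $\Delta_I=\{w\in\Delta^{m-1}:w_i=0\ (i\notin I)\}$ is carried by $\Phi$ to the solution map of the subproblem $f_I=(f_i)_{i\in I}$.

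First I would prove the weakly simplicial statement. For smoothness, $\Phi(w)$ is characterized by $F(x,w):=\sum_i w_i\nabla f_i(x)=0$, and $\partial_x F=\nabla^2 g_w$ is positive definite throughout, so the implicit function theorem shows $\Phi$ is $C^{r-1}$. For the image, every $\Phi(w)$ is weakly Pareto optimal, because a point dominating it in all objectives would strictly decrease $g_w$; conversely, by the scalarization theorem for convex problems every weakly Pareto optimal point minimizes some $g_w$ with $w\in\Delta^{m-1}$ and hence coincides with $\Phi(w)$; and since strong convexity forces strict convexity, the weak Pareto set and the Pareto set $X^*(f)$ coincide. The same two arguments applied to $f_I$ give $\Phi(\Delta_I)=X^*(f_I)$, and because every $\Phi(w)$ lies in $X^*(f)$ we in fact obtain $X^*(f_I)\subseteq X^*(f)$ for all $I$. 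Thus $\Phi$ is a $C^{r-1}$ surjection $\Delta^{m-1}\to X^*(f)$ carrying faces onto subproblem Pareto sets, and the analogous statements for $f\circ\Phi$ onto the front follow, yielding $C^{r-1}$--weak simpliciality.

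For the ``furthermore'', it remains to upgrade $\Phi$ to a homeomorphism under the corank hypothesis. The key point is a null-space count: writing $J=df_x$, the first-order condition $\sum_i w_i\nabla f_i(x)=0$ says exactly $w\in\ker(J^{\mathsf{T}})$, and $\dim\ker(J^{\mathsf{T}})=m-\rank J$. If $x\in X^*(f)$ and $\corank df_x=1$, then $\rank J=m-1$, so $\ker(J^{\mathsf{T}})$ is one dimensional. Hence if $\Phi(w)=\Phi(w')=x$ then $w,w'$ are proportional; since both sum to $1$ (and their common direction has nonzero coordinate sum, because some genuine element of $\Delta^{m-1}$ lies on it), we conclude $w=w'$. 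As every point of $\img\Phi$ lies in $X^*(f)$, this proves $\Phi$ is injective on all of $\Delta^{m-1}$, faces included. A continuous injection from the compact space $\Delta^{m-1}$ into the Hausdorff space $\R^n$ is a homeomorphism onto its image, and likewise for each restriction $\Phi|_{\Delta_I}\colon\Delta_I\to X^*(f_I)$. Finally $f$ is injective on $X^*(f)$ (again by strict convexity: a strict midpoint would dominate), so $f\circ\Phi$ is a homeomorphism onto the front respecting faces, giving $C^{r-1}$--simpliciality.

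The genuinely delicate steps are the two ``conversely'' directions rather than the corank argument, which is just the linear algebra above. Proving that every weakly Pareto optimal point is captured by some $g_w$ requires the supporting-hyperplane form of the scalarization theorem for the convex image set, and I expect the most careful bookkeeping to be needed in establishing $C^{r-1}$ regularity of $\Phi$ up to and across the boundary faces of $\Delta^{m-1}$: the implicit function theorem is immediate on each relative interior, but smoothness of the single map $\Phi$ as one passes between faces must be argued, e.g.\ by solving $F(x,w)=0$ for $w$ in an open neighborhood of $\Delta^{m-1}$ on which $\nabla^2 g_w$ stays positive definite and then restricting. Everything downstream---the face correspondence, the injectivity, and the passage to the front---follows formally once these analytic inputs are in place.
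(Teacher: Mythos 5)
Your construction of $\Phi$ as the weighted-sum solution map, the implicit-function-theorem argument for $C^{r-1}$ regularity on a neighborhood $\Delta_\delta^{m-1}$ of the simplex, the surjectivity via the scalarization/KKT necessary condition together with uniqueness of minimizers of the strongly convex $g_w$, the face correspondence $\Phi(\Delta_I)=X^*(f_I)$, and the kernel-counting argument for injectivity of $\Phi$ under the corank hypothesis all coincide with the paper's proof (its map $x^*$ and \cref{T:Pareto_convex}). The weakly simplicial half of your argument is therefore essentially the paper's own.

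The gap is in the ``furthermore''. The paper's definition of $C^{r-1}$--simplicial does not ask that $\Phi|_{\Delta_I}$ and $f|_{X^*(f_I)}$ be homeomorphisms; it asks that they be $C^{r-1}$--\emph{diffeomorphisms}, and by the paper's definition a subspace is $C^{r-1}$--diffeomorphic to a simplex only if the parametrization extends to a $C^{r-1}$--\emph{immersion} of some $\Delta_\varepsilon^{m-1}$. Your ``continuous injection from a compact space is a homeomorphism onto its image'' step only delivers the $C^0$ statement, so two immersion checks are missing. First, you must show $d\Phi_z$ has rank $m-1$ near the simplex; the paper differentiates the stationarity equation to get $d\widetilde{x^*}_z=-A(z)\prn{(df_1)_{\widetilde{x^*}(z)}-(df_m)_{\widetilde{x^*}(z)},\dots,(df_{m-1})_{\widetilde{x^*}(z)}-(df_m)_{\widetilde{x^*}(z)}}$ with $A(z)$ positive definite, and uses the corank-$1$ hypothesis to show the matrix of gradient differences has rank $m-1$ (otherwise $\ker df$ would contain two independent vectors). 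Second --- and this is the hardest part of the paper's proof, which your closing paragraph dismisses as following formally --- you must show that $f|_{X^*(f)}$ is a $C^{r-1}$--embedding. Injectivity of $f$ on $X^*(f)$ (which you correctly obtain from strict convexity) does not give this: when $n\ge m$ every point of $X^*(f)$ is a corank-$1$ critical point of $f$, so $f$ itself is nowhere an immersion there, and one needs that the restriction of $f$ to the $(m-1)$--dimensional set $X^*(f)$ is an immersion. The paper proves this by showing every Pareto point is a \emph{fold} singularity, via the criterion of \cref{T:criterion fold}, a normalization of $df_x$ by transformations preserving strong convexity, and positive definiteness of the Hessian $H(f_m)_x$. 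Without these two steps your argument establishes only $C^0$--simpliciality, not $C^{r-1}$--simpliciality.
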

For the definitions of strong convexity and weak simpliciality, see \cref{sec:preliminaries}.
Under the assumptions of \cref{T:main theorem}, the weighted-sum scalarization gives an instance of the aforementioned homeomorphism $\Phi$, that is, the mapping $x^*: \Delta^{m - 1} \to X^*(f)$ defined by
\[
    x^*(w) = \arg\min_{x \in \R^n} \prn{\sum_{i=1}^m w_i f_i(x)}
\]
is a homeomorphism.
Since the scalarized function $\sum_{i=1}^m w_i f_i: \R^n \to \R$ is strongly convex (see \cref{S:chara-pareto}), the mapping $x^*$ is well-defined and can be efficiently computed by, e.g., gradient methods.
By this property, scalarization-based subdivision methods \cite{Eichfelder2008,Mueller-Gritschneder2009,Hamada2010,Singh2011} are able to provide solutions for parametric-surface approximation methods in \cite{Kobayashi2019,Tanaka2019} to build an approximation of $\Phi$.

Note that (strict) convexity of a mapping does not necessarily imply that the corresponding problem is simplicial.
For example, the single objective problem minimizing the function $\exp(x)$ (defined on $\R$) does not have a Pareto solution (i.e.~a minimizer), in particular it is not simplicial, although $\exp(x)$ is strictly convex.

As the example given in \cref{S:example2} indicates, we cannot drop the assumption on the corank in \cref{T:main theorem}.
Nevertheless, one can make any strongly convex mapping satisfy the corank assumption by a generic linear perturbation, provided that the dimension of the source is sufficiently larger than that of the target.
See \cref{T:Pareto_convex generic} for details.
We will further observe that \cref{T:Pareto_convex generic} would not hold without the assumption on the dimensions (cf.~\cref{R:assumption dimensions}).
Note that a small linear perturbation of a strongly convex problem does not cause substantial changes of the Pareto set (see \cref{rem:Gamma} for details).

Applying \cref{T:main theorem}, we can show that several problems appearing in practical situations are simplicial.
In 1967, Kuhn~\cite{Kuhn1967} showed that the Pareto set of a location problem under the Euclidean norm is the convex hull of demand points, which becomes a simplex when the demand points are in general position.
We will observe in \cref{sec:applications} that this problem can be made strongly convex by suitable transformations on the target space preserving the Pareto ordering. 
In particular, our theorem gives an alternative proof of Kuhn's result. 
Shoval~et~al.~\cite{Shoval2012} proposed a multiobjective model for phenotypic divergence of species in evolutionary biology and indicated that its Pareto set is a curved simplex. 
Again, this problem becomes strongly convex after suitable transformations (see \cref{sec:applications}). 
We can thus give a rigorous proof for the observation in~\cite{Shoval2012}. 

The paper is organized as follows:
After preliminaries in \cref{sec:preliminaries}, the proof of the main theorem will be given in \cref{S:Topology Paretoset}.
In \cref{sec:generic}, we will show that, under some assumption on the dimensions of the source and the target, any strongly convex problem becomes simplicial after a generic linear perturbation.
\Cref{sec:applications} will be devoted to discussing practical problems.

\section{Preliminaries}\label{sec:preliminaries}
We introduce the definition of strongly convex problems and their properties and define $C^r$--(weakly) simplicial problems.
Throughout the paper, we denote the index set $\Set{1, \dots, m}$ by $M$.

\subsection{Multiobjective optimization}
A multiobjective optimization problem is a problem minimizing objective functions $f_1, \dots, f_m:X \to \R$ over a subset $X \subseteq \R^n$:
\begin{align*}
    \text{minimize } & f(x) = (f_1(x), \dots, f_m(x))\\
    \text{subject to } & x \in X (\subseteq \R^n).
\end{align*}
According to the \emph{Pareto ordering}, i.e.,
\begin{align*}
    f(x) \prec f(y) \xLeftrightarrow{\text{def}}
    f_i(x) \le f_i(y) \text{ for all } i \in M \text{ and }
    f_j(x) < f_j(y) \text{ for some } j \in M,
\end{align*}
we basically would like to obtain the \emph{Pareto set}
\[
X^*(f) = \Set{x \in X | \forall y \in X,\ f(y) \not \prec f(x)}
\]
and the \emph{Pareto front}
\[
f(X^*(f)) = \Set{f(x) \in \R^m | x \in X^*(f)}.
\]

\subsection{Simplicial problems}\label{sec:simplicial problem}
Here, we explain the definition of $C^r$--(weakly) simplicial problems for $0 \le r \le \infty$.
For $\varepsilon \ge 0$, we define the subset $\Delta_\varepsilon^{m - 1} \varsubsetneq \R^m$ as follows:
\[
\Delta_\varepsilon^{m - 1} = \Set{(w_1, \dots, w_m) \in \R^m | \sum_{i = 1}^m w_i = 1,\ w_i > - \varepsilon} \varsubsetneq \R^m.
\]
Note that the closure $\overline{\Delta_0^{m - 1}}$ is the standard simplex, which we will denote by
\[
\Delta^{m - 1} = \Set{(w_1, \dots, w_m) \in \R^m | \sum_{i = 1}^m w_i = 1,\ w_i \ge 0} \varsubsetneq \R^m.
\]
We also denote a face of $\Delta^{m - 1}$ for $I \subseteq M$ by
\[
\Delta_I = \Set{(w_1, \dots, w_m) \in \Delta^{m - 1} | w_i = 0\ (i \not \in I)} \varsubsetneq \R^m.
\]
For a subset $U \subseteq \R^m$, a continuous mapping $f: \Delta^{m - 1} \to U$ is a \emph{$C^r$--mapping} if there exist $\varepsilon>0$ and a $C^r$--mapping $\tilde{f}: \Delta^{m - 1}_\varepsilon \to \R^m$ satisfying $\tilde{f}|_{\Delta^{m - 1}} = f.$\footnote{The usual definition of a $C^r$--mapping on a manifold with corners is slightly different from that given here: the latter is stronger (as a condition) than the former.}
A subspace $X \subseteq \R^n$ is \emph{$C^r$--diffeomorphic} to the simplex $\Delta^{m - 1}$ if there exist $\varepsilon >0$ and a $C^r$--immersion $\phi: \Delta_\varepsilon^{m - 1} \to \R^n$ such that $\phi|_{\Delta^{m - 1}}: \Delta^{m - 1} \to X$ is a homeomorphism.
The reader can refer to \cite[\S.2]{Michor1980} for more general definition of diffeomorphisms between manifolds with corners.

\begin{definition}
Let $X$ be a subset of $\R^n$ and $f = (f_1, \dots, f_m)$ be a mapping from $X$ to $\R^m$.
For $I = \Set{i_1, \dots, i_k} \subseteq M$ such that $i_1 < \dots < i_k$, we put $f_I = (f_{i_1}, \dots,  f_{i_k})$.
The problem minimizing $f$ is \emph{$C^r$--simplicial} if there exists a $C^r$--mapping $\Phi: \Delta^{m - 1} \to X^*(f)$ such that both of the restrictions $\Phi|_{\Delta_I}: \Delta_I \to X^*(f_I)$ and $f|_{X^*(f_I)}$ are $C^r$--diffeomorphisms for any $I \subseteq M$.
The problem minimizing $f$ is \emph{$C^r$--weakly simplicial} if there exists a $C^r$--mapping $\phi: \Delta^{m - 1} \to f(X^*(f))$ satisfying $\phi(\Delta_I) = f(X^*(f_I))$ for any $I \subseteq M$.
\end{definition}

\subsection{Pareto solutions of strongly convex mappings}\label{S:chara-pareto}
In this subsection, a characterization of Pareto solutions of strongly convex $C^1$--mappings is given (see \cref{T:chara-pareto}).
We begin this subsection with quickly reviewing the definition of (strong) convexity.
A subset $X$ of $\R^n$ is \emph{convex} if $tx + (1 - t)y \in X$ for all $x, y \in X$ and all $t \in [0, 1]$.
Let $X$ be a convex set in $\R^n$.
A function $f: X \to \R$ is \emph{convex} if 
\[
f(tx + (1 - t)y) \le tf(x) + (1 - t)f(y)
\] 
for all $x, y \in X$ and all $t \in [0, 1]$.
A function $f: X \to \R$ is \emph{strongly convex} if there exists $\alpha > 0$ such that
\[
f(tx + (1 - t)y) \le tf(x) + (1 - t)f(y) - \frac{1}{2} \alpha t(1 - t) \norm{x - y}^2
\] 
for all $x, y \in X$ and all $t \in [0, 1]$, where $\norm{x - y}$ denotes the Euclidean norm of $x - y$.
A mapping $f = (f_1, \dots, f_m): X \to \R^m$ is \emph{(strongly) convex} if every $f_i$ is (strongly) convex.
A problem minimizing a strongly convex mapping is called a \emph{strongly convex problem}.
The followings are basic properties of (strongly) convex mappings which will be needed later on:

\begin{lemma}[{\cite[Theorem~2.1.2~(p.~54)]{Nesterov2004}}]\label{T:convexchara}
Let $X \subseteq \R^n$ be a convex open subset.
A $C^1$--function $f: X \to \R$ is convex if and only if $f(x) + d f_x \cdot (y - x) \le f(y)$ for any $x , y \in X$.
\end{lemma}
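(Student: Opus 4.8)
The plan is to prove the two implications separately. The forward direction (convexity $\Rightarrow$ gradient inequality) will combine the defining convexity inequality with the $C^1$ hypothesis to pass to a limit of difference quotients, while the reverse direction (gradient inequality $\Rightarrow$ convexity) is a purely algebraic manipulation of two tangent-plane estimates taken at a common base point. The openness and convexity of $X$ enter only to guarantee that the relevant segments and interior points remain inside the domain.

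For the forward direction, first I would fix $x, y \in X$ and consider the segment $x + t(y - x) = (1-t)x + ty$ for $t \in [0,1]$, which lies in $X$ by convexity. The definition of convexity of $f$ gives $f\big((1-t)x + ty\big) \le (1-t)f(x) + tf(y)$, and since $(1-t)f(x) + tf(y) = f(x) + t\big(f(y) - f(x)\big)$, this rearranges to $\frac{f(x + t(y-x)) - f(x)}{t} \le f(y) - f(x)$ for every $t \in (0,1]$. Because $f$ is $C^1$, the left-hand difference quotient converges, as $t \to 0^+$, to the directional derivative $df_x \cdot (y - x)$; taking this limit yields $df_x \cdot (y-x) \le f(y) - f(x)$, which is precisely the asserted inequality.

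For the reverse direction, I would fix $x, y \in X$ and $t \in [0,1]$, set $z = tx + (1-t)y$ (again in $X$ by convexity), and apply the hypothesis with base point $z$ to both $x$ and $y$, obtaining $f(x) \ge f(z) + df_z \cdot (x - z)$ and $f(y) \ge f(z) + df_z \cdot (y - z)$. Multiplying the first inequality by $t$, the second by $(1-t)$, and adding, the differential terms combine into $df_z \cdot \big(t(x - z) + (1-t)(y - z)\big)$. A direct computation shows $t(x - z) + (1-t)(y - z) = \big(tx + (1-t)y\big) - z = z - z = 0$, so the differential contribution vanishes and one is left with $tf(x) + (1-t)f(y) \ge f(z) = f\big(tx + (1-t)y\big)$, which is exactly the convexity inequality for $f$.

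Since the statement is classical, I do not expect a serious obstacle. The only step genuinely requiring the $C^1$ hypothesis is the passage to the limit of difference quotients in the forward direction, and the only point requiring care in the reverse direction is verifying that the weighted combination of displacement vectors cancels at the base point $z$; both are routine.
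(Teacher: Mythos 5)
Your proof is correct: both directions are the standard classical argument (difference quotients along the segment for convexity $\Rightarrow$ gradient inequality, and the two tangent-plane estimates at the midpoint $z = tx + (1-t)y$ for the converse), and the cancellation $t(x-z) + (1-t)(y-z) = 0$ is verified correctly. The paper itself offers no proof of this lemma---it simply cites Nesterov's textbook---and your argument is essentially the proof found there, so there is nothing to reconcile.
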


\begin{lemma}[{\cite[Theorem~2.1.11~(p.~65)]{Nesterov2004}}]
\label{T:criterion storngly convex Hessian}
Let $X \subseteq \R^n$ be a convex open subset.
A $C^2$--function $f: X \to \R$ is strongly convex if and only if there exists $\beta > 0$ such that $m(f)_x \ge \beta$ for any $x \in X$, where $m(f)_x$ is the minimal eigenvalue of the Hessian matrix of $f$ at $x$.
\end{lemma}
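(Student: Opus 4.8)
The plan is to reduce the assertion about strong convexity to the classical characterization of \emph{ordinary} convexity by introducing, for a constant $\alpha > 0$, the auxiliary function $g_\alpha(x) = f(x) - \frac{1}{2}\alpha\norm{x}^2$ on $X$. The key observation is that the Hessian of $g_\alpha$ at $x$ equals $\mathrm{Hess}\,f_x - \alpha I$, so positive semidefiniteness of this matrix is literally the inequality $m(f)_x \ge \alpha$. It therefore suffices to establish two equivalences: (i) $f$ is strongly convex with constant $\alpha$ if and only if $g_\alpha$ is convex, and (ii) a $C^2$--function on a convex open set is convex if and only if its Hessian is positive semidefinite at every point.

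For equivalence (i) I would record the elementary identity
\[
t\norm{x}^2 + (1 - t)\norm{y}^2 - \norm{tx + (1 - t)y}^2 = t(1 - t)\norm{x - y}^2,
\]
valid for all $x, y \in \R^n$ and all $t \in [0, 1]$. Substituting $g_\alpha = f - \frac{1}{2}\alpha\norm{\cdot}^2$ into the convexity inequality for $g_\alpha$ and simplifying by this identity reproduces, term for term, the defining inequality of strong convexity of $f$ with the same constant $\alpha$. This is a direct computation with no analytic subtlety.

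Equivalence (ii) is where the actual work lies, and I would deduce it from the first-order criterion in \cref{T:convexchara} together with Taylor's theorem. For the ``if'' direction, a positive semidefinite Hessian and the second-order Taylor expansion with mean-value remainder give $g(y) = g(x) + dg_x\cdot(y - x) + \frac{1}{2}(y - x)^{\top}\mathrm{Hess}\,g_\xi\,(y - x)$ for some $\xi$ on the segment from $x$ to $y$; the quadratic term is nonnegative, so $g(x) + dg_x\cdot(y - x) \le g(y)$ and $g$ is convex by \cref{T:convexchara}. For the converse I would argue by contradiction: if $v^{\top}\mathrm{Hess}\,g_{x_0}\,v < 0$ for some point $x_0$ and some direction $v$, then the one-variable function $t \mapsto g(x_0 + tv)$ has strictly negative second derivative at $t = 0$, hence is strictly concave near $0$, contradicting the convexity of $g$.

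Chaining the two equivalences yields the lemma: $f$ is strongly convex (for some $\alpha > 0$) if and only if $g_\alpha$ is convex, if and only if $\mathrm{Hess}\,f_x - \alpha I$ is positive semidefinite for all $x$, i.e.\ $m(f)_x \ge \alpha$ for all $x$; setting $\beta = \alpha$ closes one implication and $\alpha = \beta$ the other. I expect the main obstacle to be the careful bookkeeping in (ii): one must use convexity of $X$ to ensure the segment from $x$ to $y$ stays in $X$ so that Taylor's theorem applies, and invoke the symmetry of the Hessian of a $C^2$--function so that $m(f)_x$ is a genuine real eigenvalue and the equivalence ``$\mathrm{Hess}\,f_x \succeq \alpha I$'' $\Leftrightarrow$ ``$m(f)_x \ge \alpha$'' is legitimate.
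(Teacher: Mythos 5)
Your proof is correct. Note, however, that the paper does not prove this lemma at all: it is quoted with a citation to Nesterov's textbook (Theorem~2.1.11), so your argument is a self-contained alternative rather than a reconstruction of anything in the paper. The two routes differ in where the work happens. Nesterov's proof goes through the first-order characterization of strong convexity, $f(y) \ge f(x) + df_x\cdot(y-x) + \frac{\mu}{2}\norm{y-x}^2$, extracting the Hessian bound by a limit along $y = x + sv$ as $s \to 0$, and recovering strong convexity from the eigenvalue bound via an integral representation of the gradient along the segment; moreover, Nesterov's definition of strong convexity is that first-order inequality, so the paper's citation implicitly also uses the equivalence of that definition with the Jensen-type one adopted in the paper. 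Your route avoids both issues: the identity $t\norm{x}^2 + (1-t)\norm{y}^2 - \norm{tx+(1-t)y}^2 = t(1-t)\norm{x-y}^2$ shows that strong convexity of $f$ with constant $\alpha$ (in the paper's sense) is \emph{exactly} ordinary convexity of $g_\alpha = f - \frac{1}{2}\alpha\norm{\cdot}^2$, and the eigenvalue condition $m(f)_x \ge \alpha$ is literally positive semidefiniteness of the Hessian of $g_\alpha$; the remaining content is the classical second-order criterion for convexity, which you correctly derive from \cref{T:convexchara} plus Taylor's theorem with Lagrange remainder in one direction and a one-variable contradiction argument in the other. Your bookkeeping of the hypotheses is also right: convexity of $X$ keeps the Taylor segment inside $X$, openness of $X$ makes the line restriction argument legitimate, and symmetry of the Hessian of a $C^2$--function makes the passage between $\mathrm{Hess}\,f_x \succeq \alpha I$ and $m(f)_x \ge \alpha$ valid. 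I see no gap.
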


\begin{lemma}[{\cite[Theorem~2.2.6~(p.~85)]{Nesterov2004}}]\label{T:minimum}
Let $f: \R^n \to \R$ be a strongly convex $C^1$--function.
Then, there exists a unique point such that the function $f$ is minimized.
\end{lemma}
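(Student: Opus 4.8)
The plan is to extract from strong convexity a single quadratic minorant and then read off existence and uniqueness from it separately. First I would record the inequality
\begin{equation*}
f(y) \ge f(x) + d f_x \cdot (y - x) + \frac{\alpha}{2}\norm{y - x}^2 \qquad (x, y \in \R^n), \tag{$\ast$}
\end{equation*}
where $\alpha > 0$ is the constant appearing in the definition of strong convexity. To obtain $(\ast)$ I would apply \cref{T:convexchara} to the auxiliary function $g = f - \frac{\alpha}{2}\norm{\cdot}^2$: using the identity $\norm{tx + (1 - t)y}^2 = t\norm{x}^2 + (1 - t)\norm{y}^2 - t(1 - t)\norm{x - y}^2$, one checks that strong convexity of $f$ is precisely convexity of $g$, and rearranging the first-order estimate $g(x) + d g_x \cdot (y - x) \le g(y)$ gives $(\ast)$. (Alternatively, dividing the defining inequality of strong convexity by $t$ and letting $t \to 0^+$ produces $(\ast)$ directly.)

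Granting $(\ast)$, existence follows from coercivity. Fixing a base point $x_0$ and bounding the linear term $d f_{x_0} \cdot (y - x_0)$ from below by $-C\norm{y - x_0}$ (Cauchy--Schwarz, with $C$ the norm of the differential at $x_0$), the right-hand side of $(\ast)$ tends to $+\infty$ as $\norm{y} \to \infty$; in particular $f$ is bounded below and every sublevel set of $f$ is bounded. I would then choose a minimizing sequence $\{y_k\}$ with $f(y_k) \to \inf f$, note that it lies in a bounded, hence relatively compact, sublevel set, extract a convergent subsequence $y_{k_j} \to x^*$, and conclude $f(x^*) = \inf f$ by continuity. Thus a minimizer exists.

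For uniqueness I would invoke $(\ast)$ once more, now anchored at a minimizer $x^*$. Since $f$ is $C^1$ and $x^*$ minimizes $f$ over the open set $\R^n$, Fermat's rule gives $d f_{x^*} = 0$, so $(\ast)$ with $x = x^*$ reduces to $f(y) \ge f(x^*) + \frac{\alpha}{2}\norm{y - x^*}^2$. The right-hand side strictly exceeds $f(x^*)$ for every $y \ne x^*$, so no other point can attain the minimum; the minimizer is therefore unique (indeed the unique critical point of $f$).

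The only genuinely technical step is establishing the minorant $(\ast)$, i.e.\ translating the one-parameter definition of strong convexity into a global quadratic lower bound. Once $(\ast)$ is available, existence is the standard ``coercive continuous function attains its infimum'' argument and uniqueness is the one-line strict inequality above, so I anticipate no further obstacle.
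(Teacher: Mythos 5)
Your proof is correct. The paper itself offers no proof of this lemma---it is quoted directly from Nesterov \cite{Nesterov2004}---so there is nothing internal to compare against; your argument is the standard one and is essentially what the cited source does: the quadratic minorant $f(y) \ge f(x) + df_x\cdot(y-x) + \frac{\alpha}{2}\norm{y-x}^2$ (correctly derived here by applying the first-order convexity criterion to $f - \frac{\alpha}{2}\norm{\cdot}^2$, using the parallelogram-type identity), coercivity and compactness of sublevel sets for existence, and the strict inequality at a critical point for uniqueness. All three steps check out, including the use of Fermat's rule, which is legitimate since the minimization is over the open set $\R^n$ and $f$ is $C^1$.
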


In the rest of this subsection we will prove the following proposition:
\begin{proposition}\label{T:chara-pareto}
Let $f = (f_1, \dots, f_m): \R^n \to \R^m$ be a strongly convex $C^1$--mapping.
Then, $x \in X^*(f)$ if and only if there exists an element $(w_1, \dots, w_m) \in \Delta^{m - 1}$ such that $f$ satisfies one (and hence both) of the following equivalent conditions:
  \begin{enumerate}
  \item $\sum_{i = 1}^m w_i (d f_i)_x = 0$.
  \item The point $x \in \R^n$ is a unique element such that the function $\sum_{i = 1}^m w_i f_i$ is minimized.
  \end{enumerate}
\end{proposition}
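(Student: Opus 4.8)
The plan is to first establish the equivalence of conditions (1) and (2) for a fixed weight vector, and then prove the two implications of the characterization separately. Throughout, write $g_w = \sum_{i = 1}^m w_i f_i$ for $w = (w_1, \dots, w_m) \in \Delta^{m - 1}$. Since $\sum_i w_i = 1$ and each $w_i \ge 0$, at least one weight is strictly positive; as the sum of a positively scaled strongly convex function and finitely many nonnegatively scaled convex functions, $g_w$ is itself strongly convex, and hence by \cref{T:minimum} it has a unique minimizer. Condition (1) is exactly the statement $(dg_w)_x = 0$, and the convexity criterion \cref{T:convexchara} shows that a critical point of a convex $C^1$--function is a global minimizer; strong convexity then forces this to be the unique minimizer, which is condition (2). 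Conversely, (2) implies (1) because $g_w$ is $C^1$ on the open set $\R^n$, so its differential vanishes at any minimizer. This settles the equivalence of (1) and (2).

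For the \emph{if} direction, I would suppose that $x$ satisfies (2) for some $w \in \Delta^{m - 1}$, i.e.\ $x$ is the unique minimizer of $g_w$, and assume for contradiction that $x \notin X^*(f)$. Then there is $y \in \R^n$ with $f(y) \prec f(x)$, so $f_i(y) \le f_i(x)$ for all $i$ and $f_j(y) < f_j(x)$ for some $j$. Multiplying by the weights $w_i \ge 0$ and summing yields $g_w(y) \le g_w(x)$. Since $x$ is the \emph{unique} minimizer, this forces $y = x$, contradicting $f_j(y) < f_j(x)$. The essential point here is that uniqueness of the minimizer (guaranteed by strong convexity) lets us handle the case where the strictly improved coordinate $j$ happens to carry weight $w_j = 0$.

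For the \emph{only if} direction, I would suppose $x \in X^*(f)$ and aim to produce $w \in \Delta^{m - 1}$ with $\sum_i w_i (df_i)_x = 0$, equivalently to show that the origin lies in the convex hull $C = \mathrm{conv}\Set{(df_1)_x, \dots, (df_m)_x} \subseteq \R^n$. Assuming instead that $0 \notin C$, I would use that $C$ is compact and convex, so the separating hyperplane theorem provides a vector $v \in \R^n$ with $(df_i)_x \cdot v < 0$ for every $i \in M$; that is, $v$ is a common descent direction. Then along the curve $t \mapsto x + tv$ one has $\frac{d}{dt} f_i(x + tv)\big|_{t = 0} = (df_i)_x \cdot v < 0$ for each $i$, so for a sufficiently small $t > 0$ (chosen to work for all finitely many indices simultaneously) every $f_i(x + tv) < f_i(x)$, whence $f(x + tv) \prec f(x)$, contradicting the Pareto optimality of $x$. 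Therefore $0 \in C$, and expressing $0$ as a convex combination of the gradients yields the desired coefficients $w \in \Delta^{m - 1}$ satisfying (1).

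The routine parts are the equivalence and the \emph{if} direction, which follow directly from \cref{T:convexchara} and \cref{T:minimum}. The main obstacle is the \emph{only if} direction, where the separation argument must be set up with care: one must ensure the separating vector yields a \emph{strict} common descent direction for all $m$ objectives at once, and that the infinitesimal decrease can be promoted to an actual dominating point $x + tv$ uniformly over the finite index set. It is worth noting that this direction uses only the $C^1$--hypothesis and first-order optimality, whereas (strong) convexity enters decisively in the other two steps through the uniqueness of minimizers.
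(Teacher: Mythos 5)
Your proof is correct and follows the same overall decomposition as the paper's (first the equivalence of conditions (1) and (2) via \cref{T:convexchara} and \cref{T:minimum}, then the two directions of the characterization), but it differs in that you supply self-contained arguments for the two ingredients the paper imports as black boxes from Miettinen, namely \cref{T:sufficient} and \cref{T:necessary}. For the \emph{if} direction you reprove \cref{T:sufficient} by the standard uniqueness argument (domination plus nonnegative weights gives $g_w(y)\le g_w(x)$, and uniqueness of the minimizer forces $y=x$), and for the \emph{only if} direction you reprove the Karush--Kuhn--Tucker necessary condition \cref{T:necessary} by strictly separating the origin from the compact convex hull of the gradients and promoting the resulting common strict descent direction to an actual dominating point; both arguments are sound (modulo the harmless sign choice of the separating vector), and your separation argument in fact establishes the stronger statement that the condition is necessary even for weak Pareto optimality. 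What your route buys is independence from the cited reference and an explicit view of where convexity is and is not used --- as you note, the necessity direction needs only the $C^1$ hypothesis; what the paper's route buys is brevity and reliance on standard, citable results. One small point of care: in the last step you should fix $t$ small enough that \emph{all} $m$ inequalities $f_i(x+tv)<f_i(x)$ hold simultaneously, which you do address by taking the minimum over the finitely many thresholds.
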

For the proof, we prepare some lemmas.
\begin{lemma}[{\cite[Theorem~3.1.3~(p.~79)]{Miettinen1999}}]\label{T:sufficient}
Let $f = (f_1, \dots, f_m): \R^n \to \R^m$ be a mapping and let $(w_1, \dots, w_m) \in \Delta^{m - 1}$ be an element.
If $x \in \R^n$ is a unique element such that the function $\sum_{i = 1}^m w_i f_i$ is minimized, then $x \in X^*(f)$.
\end{lemma}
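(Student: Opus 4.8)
The plan is to argue by contradiction, exploiting the fact that the nonnegativity of the weights $w_i$ turns a Pareto-dominating point into a competitor for the scalarized minimum.

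First I would suppose the conclusion fails, so that $x \notin X^*(f)$ even though $x$ is the unique minimizer of $g := \sum_{i=1}^m w_i f_i$. By the definition of the Pareto set, there is then some $y \in \R^n$ with $f(y) \prec f(x)$; that is, $f_i(y) \le f_i(x)$ for every $i \in M$, with a strict inequality $f_j(y) < f_j(x)$ for at least one index $j \in M$.

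Next I would compare the scalarized values at $x$ and $y$. Writing $g(x) - g(y) = \sum_{i=1}^m w_i (f_i(x) - f_i(y))$ and using $w_i \ge 0$ together with $f_i(x) - f_i(y) \ge 0$ for all $i \in M$, each summand is nonnegative, so $g(y) \le g(x)$. Since $x$ minimizes $g$ we also have $g(x) \le g(y)$, and therefore $g(x) = g(y)$; in other words $y$ is likewise a minimizer of $g$. The strict inequality $f_j(y) < f_j(x)$ forces $y \ne x$ (otherwise $f_j(y) = f_j(x)$), so $g$ would have two distinct minimizers $x$ and $y$, contradicting the assumed uniqueness. Hence $x \in X^*(f)$.

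There is no genuinely hard step here: this is the classical sufficient condition for Pareto optimality via weighted-sum scalarization, and the whole argument is an elementary contradiction. The only points requiring a little care are the role of the weights and the role of uniqueness. The inequality $g(y) \le g(x)$ uses $w_i \ge 0$ crucially, so that the componentwise Pareto inequalities propagate to the weighted sum in the correct direction; and the strict component $f_j(y) < f_j(x)$ is exactly what guarantees $y \ne x$, which is what allows the \emph{uniqueness} hypothesis, rather than mere minimality, to deliver the contradiction.
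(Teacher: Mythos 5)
Your proof is correct. Note that the paper itself gives no proof of this lemma: it is quoted directly from Miettinen's textbook (Theorem~3.1.3 there), so there is nothing in the paper to compare against. Your argument is the classical one, and you handle the one subtle point properly: since weights in $\Delta^{m-1}$ may vanish, Pareto domination only yields $g(y) \le g(x)$ rather than a strict inequality, so mere minimality of $x$ is not enough and the \emph{uniqueness} hypothesis is exactly what turns the equality $g(x)=g(y)$ with $y \ne x$ into a contradiction.
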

The following is a special case of the Karush-Kuhn-Tucker necessary condition for Pareto optimality.
\begin{lemma}[{\cite[Theorem~3.1.5~(p.~39)]{Miettinen1999}}]\label{T:necessary}
Let $f = (f_1, \dots, f_m): \R^n \to \R^m$ be a $C^1$--mapping.
If $x \in X^*(f)$, then there exists an element $(w_1, \dots, w_m) \in \Delta^{m - 1}$ satisfying $\sum_{i = 1}^m w_i (d f_i)_x = 0$.
\end{lemma}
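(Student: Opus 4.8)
The plan is to derive the relation among the differentials from a theorem of the alternative. Abbreviate $g_i = \prn{d f_i}_x \in \R^n$ for the differentials of the components at the Pareto point $x$, and consider their convex hull $C = \Set{\textstyle\sum_{i \in M} \lambda_i g_i | (\lambda_1, \dots, \lambda_m) \in \Delta^{m - 1}}$, a compact convex subset of $\R^n$. The whole statement reduces to showing $0 \in C$: a witnessing $(\lambda_1, \dots, \lambda_m) \in \Delta^{m - 1}$ with $\sum_{i \in M} \lambda_i g_i = 0$ is exactly the desired $(w_1, \dots, w_m)$, so no separate normalization step is needed.

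First I would record the geometric consequence of Pareto optimality: there is no common descent direction, i.e.\ no $v \in \R^n$ with $\gen{g_i, v} < 0$ for all $i \in M$. Indeed, suppose such a $v$ existed. Since $f$ is $C^1$, each component satisfies $f_i(x + t v) = f_i(x) + t \gen{g_i, v} + o(t)$, so for all sufficiently small $t > 0$ the increment $t \gen{g_i, v} + o(t)$ is negative and hence $f_i(x + t v) < f_i(x)$ for every $i \in M$. This gives $f(x + t v) \prec f(x)$, contradicting $x \in X^*(f)$.

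Then I would close the argument by separation. If $0 \notin C$, then since $C$ is compact and convex the separating hyperplane theorem furnishes $d \in \R^n$ and $\alpha > 0$ with $\gen{d, g} \ge \alpha$ for every $g \in C$; in particular $\gen{d, g_i} \ge \alpha > 0$ for each $i \in M$. Setting $v = -d$ yields $\gen{g_i, v} \le -\alpha < 0$ for all $i$, producing exactly the common descent direction ruled out above. Hence $0 \in C$, which is the assertion. (The dichotomy ``either a common descent direction exists or $0$ is a convex combination of the $g_i$'' is precisely Gordan's theorem of the alternative.)

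The hard part---in fact the only step with real content---will be the separation: it is where the finiteness of $M$ (making $C$ the convex hull of finitely many points, hence compact) and strict separation of a point from a closed convex set are used. The descent computation is a routine first-order expansion, and the passage from $0 \in C$ to the weight vector is immediate, since membership in $C$ already supplies coefficients lying in $\Delta^{m - 1}$.
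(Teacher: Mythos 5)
Your proof is correct. One thing to be aware of: the paper gives no proof of this lemma at all --- it is quoted as a known result from Miettinen's book (Theorem~3.1.5 there), where it appears as the Fritz John/KKT-type necessary condition for Pareto optimality in a general constrained setting. So what you have written is a self-contained proof of a cited result, specialized to the unconstrained $C^1$ case that the paper actually uses. Your route --- (i) Pareto optimality over all of $\R^n$ excludes a common descent direction, by a first-order expansion where finiteness of $M$ lets you pick one $t>0$ making every component strictly decrease, hence a point strictly dominating $x$; (ii) if $0 \notin C$, strict separation of the point $0$ from the compact convex hull $C$ of the gradients produces exactly such a descent direction; (iii) $0 \in C$ hands you the weights in $\Delta^{m-1}$ --- is precisely Gordan's theorem of the alternative, and it is the standard, most elementary argument available in this unconstrained smooth setting. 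All three steps are sound as written. The trade-off is the expected one: the citation to Miettinen covers the general constrained case (where linearization plus a Motzkin-type alternative and constraint qualifications enter), while your argument buys self-containedness for exactly the statement the paper needs, at essentially no extra length.
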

\begin{lemma}\label{T:convex_property}
Let $f = (f_1, \dots, f_m): \R^n \to \R^m$ be a convex $C^1$--mapping.
Let $(w_1, \dots, w_m) \in \Delta^{m - 1}$ be an element.
Then, the following conditions for $x \in \R^n$ are equivalent.
  \begin{enumerate}
  \item $\sum_{i = 1}^m w_i (d f_i)_x = 0$.
  \item The function $\sum_{i = 1}^m w_i f_i: \R^n \to \R$ attains its minimum at $x$.
  \end{enumerate}
\end{lemma}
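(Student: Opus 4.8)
The plan is to treat the weighted sum $g = \sum_{i = 1}^m w_i f_i: \R^n \to \R$ as a single convex $C^1$--function and reduce both implications to the first-order theory of convex minimizers. First I would record two elementary preliminaries. Since $(w_1, \dots, w_m) \in \Delta^{m - 1}$ we have $w_i \ge 0$ for every $i$, so $g$ is a nonnegative linear combination of convex functions and is therefore itself convex (this follows at once from the defining inequality of convexity). Moreover $g$ is $C^1$ and, by linearity of the differential, $d g_x = \sum_{i = 1}^m w_i (d f_i)_x$; hence condition (1) is precisely the assertion that $d g_x = 0$.

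For the implication (2) $\Rightarrow$ (1), I would invoke Fermat's necessary condition: if the $C^1$--function $g$ attains its minimum at $x$, then its differential vanishes there, $d g_x = 0$, which is exactly (1). This direction uses only differentiability and not convexity.

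The converse (1) $\Rightarrow$ (2) is where convexity enters, and I would handle it via \cref{T:convexchara}. Applying the gradient inequality of that lemma to each $f_i$ gives, for every $y \in \R^n$, the bound $f_i(x) + (d f_i)_x \cdot (y - x) \le f_i(y)$. Multiplying by $w_i \ge 0$ and summing over $i \in M$ yields
\[
g(x) + \prn{\sum_{i = 1}^m w_i (d f_i)_x} \cdot (y - x) \le g(y).
\]
Under assumption (1) the parenthesized coefficient vanishes, so $g(x) \le g(y)$ for all $y \in \R^n$; that is, $g$ attains its minimum at $x$, establishing (2). (Equivalently, once convexity of $g$ is noted one can apply \cref{T:convexchara} directly to $g$.)

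There is essentially no hard step here: the statement is a clean consequence of the first-order characterization of minimizers of convex functions. The only point requiring care is the sign of the coefficients, namely that $w_i \ge 0$ since $(w_1, \dots, w_m) \in \Delta^{m - 1}$, which is what guarantees both that convexity is preserved under the weighted sum and that the $m$ gradient inequalities may be added without reversing.
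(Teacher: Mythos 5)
Your proof is correct and follows essentially the same route as the paper: both reduce to the single convex function $g=\sum_i w_i f_i$, use Fermat's condition for (2)\,$\Rightarrow$\,(1), and use the gradient inequality of \cref{T:convexchara} for (1)\,$\Rightarrow$\,(2). The only cosmetic difference is that you apply \cref{T:convexchara} to each $f_i$ and sum, while the paper applies it directly to $g$ --- a variation you yourself note is equivalent.
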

\begin{proof}[Proof of \cref{T:convex_property}]
Set $g = \sum_{i = 1}^m w_i f_i$.
Then, for any $x \in \R^n$, we have
  \begin{align}
  \sum_{i = 1}^m w_i (d f_i)_x = dg_x.
  \label{Eq:convex_gradient_1}
  \end{align}
Since $g$ is convex, we can deduce from \cref{T:convexchara} that the following inequality holds for any $y \in \R^n$:
  \begin{align}
  g(x) + dg_x \cdot (y-x) \le g(y).
  \label{Eq:convex_gradient_2}
  \end{align}
Suppose that $\sum_{i = 1}^m w_i (d f_i)_x = 0$.
We can easily deduce the assertion~2 from \cref{Eq:convex_gradient_1} and \cref{Eq:convex_gradient_2}.
Suppose that $\sum_{i = 1}^m w_i f_i: \R^n \to \R$ attains its minimum at $x$.
Since $dg_x$ is equal to $0$ and the equality \cref{Eq:convex_gradient_1} holds, we have the assertion~1.
\end{proof}

\begin{proof}[Proof of \cref{T:chara-pareto}]
Suppose that $x \in X^*(f)$.
Using \cref{T:necessary}, we can verify that there exists an element $(w_1, \dots, w_m) \in \Delta^{m - 1}$ satisfying $\sum_{i = 1}^m w_i (d f_i)_x = 0$.
From \cref{T:convex_property}, the point $x \in \R^n$ is an element such that $\sum_{i = 1}^m w_i f_i$ is minimized.
Since $\sum_{i = 1}^m w_i f_i$ is strongly convex $C^1$--function, by \cref{T:minimum}, we have the assertion~2.
Finally, suppose the assertion~2.
Then, from \cref{T:sufficient}, we get $x \in X^*(f)$.
\end{proof}

\subsection{Fold singularities}\label{S:fold}
In this subsection we will briefly review the definition and basic properties of fold singularities (for details, see \cite{Golubitsky1974}).
For $0 \le k \le \min \Set{n, m}$, we define a subset $S_k \varsubsetneq J^1(\R^n, \R^m)$ as follows:
\[
S_k = \Set{j^1 g(x) \in J^1(\R^n, \R^m) | \begin{minipage}[c]{56mm}
$x \in \R^n$, $g: \R^n \to \R^m$~:~$C^2$--mapping,
$\min \Set{n, m} - \rank(dg_x) = k$
\end{minipage}},
\]
where $j^1 g: \R^n \to J^1(\R^n, \R^m)$ is the $1$--jet extension of $g$.
Let $g: \R^n \to \R^m$ be a $C^2$--mapping, $S \subseteq J^1(\R^n, \R^m)$ be a submanifold, and $x \in \R^n$.
The mapping $j^1 g$ is \emph{transverse} to $S$ at $x$ if either of the following conditions holds:
\begin{itemize}
  \item $j^1 g(x)$ is not contained in $S$, 
  \item $j^1 g(x) \in S$ and $ d(j^1 g)_x (T_x \R^n) + T_{j^1 g(x)} S = T_{j^1 g(x)} J^1(\R^n, \R^m)$.
\end{itemize}
The mapping $j^1 g$ is \emph{transverse} to $S$ if it is transverse to $S$ at any point in $\R^n$.

Suppose that $n$ is greater than or equal to $m$.
For a $C^2$--mapping $f: \R^n \to \R^m$, we denote the critical point set of $f$ by $\Crit(f) \subseteq \R^n$.
A point $x \in \Crit(f)$ is called a \emph{fold} if the following conditions hold:
\begin{enumerate}
  \item 
  $j^1 f$ is transverse to $S_1$ at $x_0$.
  \item 
  $T_{x_0} S_1(f) \oplus \ker d f_{x_0} = T_{x_0} \R^n$, where $S_1(f) = (j^1 f)^{-1}(S_1)$.
\end{enumerate}
Note that we can easily deduce from the condition 2 that the restriction $f|_{\Crit(f)}$ is an immersion around a fold.

\begin{remark}
Let $f: \R^n \to \R^m$ be a $C^\infty$--mapping and $x \in \Crit(f)$ be a fold.
One can take coordinate neighborhoods $(U, \varphi)$ and $(V, \psi)$ at $x$ and $f(x)$, respectively, so that they satisfy:
\[
\psi \circ f \circ \varphi^{-1}(x_1, \dots, x_n) = \prn{x_1, \dots, x_{m - 1}, \sum_{k = m}^n \pm x_k^2}.
\]
\end{remark}

In what follows we will give a useful criterion for detecting fold singularities.
Let $f = (f_1, \dots, f_m): \R^n \to \R^m$ be a $C^2$--mapping and $x_0 \in \Crit(f)$.
Suppose that the corank
\footnote{For a linear mapping $\varphi: V \to W$ the non-negative number $\dim W - \rank(\varphi)$ is called the \emph{corank} of $\varphi$.}
of $d f_{x_0}$ is $1$ and the matrix $\prn{\frac{\partial f_i}{\partial x_j}(x_0)}_{1 \le i, j \le m - 1}$ is regular.
We define the function $\lambda_f: \R^n \to \R^{n - m + 1}$ as follows:
\[
\lambda_f(x) = (J_1(x), \dots, J_{n - m + 1}(x)),
\]
where
\[
J_i(x) = \det
\begin{pmatrix}
\dfrac{\partial f_1}{\partial x_1}(x)           & \cdots & \dfrac{\partial f_m}{\partial x_1}(x)\\
\vdots                                          & \ddots & \vdots\\
\dfrac{\partial f_1}{\partial x_{m - 1}}(x)     & \cdots & \dfrac{\partial f_m}{\partial x_{m - 1}}(x)\\
\dfrac{\partial f_1}{\partial x_{m - 1 + i}}(x) & \cdots & \dfrac{\partial f_m}{\partial x_{m - 1 + i}}(x)
\end{pmatrix}.
\]

\begin{lemma}\label{T:criterion fold}
Under the situation above, $x_0$ is a fold if and only if the following conditions hold:
\begin{enumerate}
  \item the differential $(d \lambda_f)_{x_0}$ has rank $n - m + 1$,
  \item $\ker (d \lambda_f)_{x_0} \oplus \ker d f_{x_0} = T_{x_0} \R^n$.
\end{enumerate}
\end{lemma}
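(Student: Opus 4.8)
The plan is to recognize $\lambda_f$ as a local defining submersion for the corank-one jet locus $S_1(f)$, thereby translating each of the two defining conditions of a fold into the corresponding condition on $\lambda_f$.

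First I would establish, on a neighborhood of $x_0$, the set-theoretic identity $S_1(f) = \lambda_f^{-1}(0)$. Since the block $B := \prn{\partial f_i/\partial x_j}_{1\le i,j\le m-1}$ is invertible at $x_0$, it stays invertible nearby, so $\rank df_x \ge m-1$ and hence $\corank df_x \le 1$ there. Up to transposition, each $J_i(x)$ is the $m\times m$ minor of $df_x$ on the columns $1,\dots,m-1,m-1+i$; because the first $m-1$ columns are already linearly independent, $J_i(x)=0$ holds exactly when the column indexed $m-1+i$ lies in their span. Consequently $\lambda_f(x)=0$ iff every remaining column lies in that span, i.e.\ iff $\rank df_x=m-1$, i.e.\ iff $x\in S_1(f)$.

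Next I would analyze $S_1$ in the jet space. Writing $(x,y,p)$ for the standard coordinates on $J^1(\R^n,\R^m)$ with $p=(p_{jk})$, the stratum $S_1$ depends only on $p$ and, on the chart where the top-left $(m-1)\times(m-1)$ block of $p$ is invertible, coincides with $\Set{\rank p \le m-1}$, of codimension $n-m+1$ (cf.\ \cite{Golubitsky1974}). Let $\tilde J_i(p)$ denote the minor on columns $1,\dots,m-1,m-1+i$, so that $\tilde J_i\circ j^1 f = J_i$ and $(\tilde J_1,\dots,\tilde J_{n-m+1})\circ j^1 f=\lambda_f$. A one-line cofactor computation gives $\partial \tilde J_i/\partial p_{m,\,m-1+i}=\det B\neq 0$, while $\tilde J_i$ is independent of $p_{m,\,m-1+i'}$ for $i'\neq i$; hence the differentials $d\tilde J_i$ are linearly independent, so $(\tilde J_i)_i$ is a submersion whose zero set is a codimension $(n-m+1)$ submanifold containing $S_1$ and of the same dimension. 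Therefore $(\tilde J_i)_i$ is a local defining submersion for $S_1$ near $j^1 f(x_0)$.

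Finally I would assemble the equivalence. For a submanifold cut out by a submersion $g$, transversality $j^1 f \pitchfork S_1$ at $x_0$ is equivalent to $g\circ j^1 f=\lambda_f$ being a submersion at $x_0$, i.e.\ to $\rank (d\lambda_f)_{x_0}=n-m+1$; this identifies the first condition of the lemma with the first condition in the definition of a fold. Granting it, $S_1(f)=\lambda_f^{-1}(0)$ is a submanifold with $T_{x_0}S_1(f)=\ker (d\lambda_f)_{x_0}$, so the second fold condition $T_{x_0}S_1(f)\oplus \ker df_{x_0}=T_{x_0}\R^n$ becomes precisely $\ker (d\lambda_f)_{x_0}\oplus \ker df_{x_0}=T_{x_0}\R^n$; and if the rank condition fails, then neither side holds. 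Combining the two equivalences yields the claim. The main obstacle is the jet-space step: one must verify that the minors really furnish a submersion defining $S_1$ locally (the determinantal-variety structure) and that transversality to this only-locally-defined stratum reduces cleanly to the submersion condition on $\lambda_f$; the cofactor identity above is exactly what makes this routine.
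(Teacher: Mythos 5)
Your proposal is correct and follows exactly the route the paper indicates: the paper itself offers no formal proof, only the remark following the lemma asserting that condition (1) is equivalent to transversality of $j^1 f$ to $S_1$ at $x_0$ and that $T_{x_0}S_1(f)=\ker(d\lambda_f)_{x_0}$, and your argument (minors as a local defining submersion for $S_1$ on the chart where the $(m-1)\times(m-1)$ block is invertible, then pulling back) is precisely the standard justification of those two assertions. The cofactor computation and the identification $S_1(f)=\lambda_f^{-1}(0)$ near $x_0$ are both correct.
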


\begin{remark}
The two conditions in \cref{T:criterion fold} are equivalent to those in the original definition above.
Indeed, the first condition is equivalent to the condition that $j^1 f$ is transverse to $S_1$ at $x_0$, and $T_{x_0} S_1(f)$ is equal to $\ker (d \lambda_f)_{x_0}$.
\end{remark}

\section{Proof of the main result}\label{S:Topology Paretoset}
In this section we will show that strongly convex problems are simplicial.
Let $f=(f_1, \dots, f_m): \R^n \to \R^m$ be a strongly convex $C^r$--mapping $(2 \le r \le \infty)$.
Since $\sum_{i = 1}^m w_i f_i$ is strongly convex for any $(w_1, \dots, w_m) \in \Delta^{m - 1}$, there exists a unique point $x \in \R^n$ such that $\sum_{i = 1}^m w_i f_i$ is minimized (see \cref{T:minimum}).
We denote this minimizing point by $\arg \min_{x \in \R^n} \prn{\sum_{i = 1}^m w_i f_i(x)} \in \R^n$, which is contained in $X^*(f)$ by \cref{T:sufficient}.
We can thus define a mapping $x^*: \Delta^{m - 1} \to X^*(f)$ as follows:
\[
x^*(w) = \arg \min_{x \in \R^n} \prn{\sum_{i = 1}^m w_i f_i(x)}.
\]

\begin{theorem}\label{T:Pareto_convex}
Let $f = (f_1, \dots, f_m): \R^n \to \R^m$ be a strongly convex $C^r$--mapping $(2 \le r \le \infty)$.
\begin{enumerate}
\item The mapping $x^*: \Delta^{m - 1} \to X^*(f)$ (and thus $f \circ x^*: \Delta^{m - 1} \to f(X^*(f))$) is a surjective mapping of class $C^{r - 1}$.
\item Suppose that the corank of $d f_x$ is equal to $1$ for any $x \in X^*(f)$.
  \begin{enumerate}[A.]
  \item The mapping $x^*: \Delta^{m - 1} \to X^*(f)$ is a $C^{r - 1}$--diffeomorphism.
  \item The restriction $f|_{X^*(f)}: X^*(f) \to \R^m$ is a $C^{r - 1}$--embedding.
  \end{enumerate}
\end{enumerate}
\end{theorem}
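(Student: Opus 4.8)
The plan is to extract everything from the first-order characterization in \cref{T:chara-pareto} together with the implicit function theorem, treating the two parts in turn. For Part~1, surjectivity of $x^*$ is immediate: \cref{T:chara-pareto} says that every $x \in X^*(f)$ is the unique minimizer of $\sum_i w_i f_i$ for some $w \in \Delta^{m - 1}$, i.e.\ $x = x^*(w)$. For the regularity, consider the $C^{r - 1}$--map $G(x, w) = \sum_{i = 1}^m w_i (d f_i)_x$, regarded (via gradients) as an $\R^n$--valued function of $(x, w) \in \R^n \times \R^m$; by \cref{T:convex_property} the equation $G(x, w) = 0$ with $w \in \Delta^{m - 1}$ characterizes $x = x^*(w)$. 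The partial differential $\partial_x G(x, w)$ is the Hessian of the scalarization $\sum_i w_i f_i$, which for $w \in \Delta^{m - 1}$ is strongly convex and hence has positive definite, in particular invertible, Hessian by \cref{T:criterion storngly convex Hessian}. The implicit function theorem then produces, near each $w_0 \in \Delta^{m - 1}$, a local $C^{r - 1}$--solution $w \mapsto x(w)$ which on $\Delta^{m - 1}$ must coincide with $x^*$ by uniqueness of critical points of a strongly convex function. Since $\Delta^{m - 1}$ is compact and positive definiteness is an open condition, these local solutions glue to a single $C^{r - 1}$--extension defined on some collar $\Delta_\varepsilon^{m - 1}$, which shows $x^*$ is of class $C^{r - 1}$.

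For Part~2 I would first record the differential of $x^*$ obtained from the implicit equation: writing $J = d f_{x^*(w)}$ ($m \times n$) and $H$ for the invertible positive definite Hessian above, differentiating $G(x^*(w), w) = 0$ gives $d x^*_w = - H^{-1} J^{\mathsf T}|_{T\Delta}$, where $T\Delta = \Set{v \in \R^m | \sum_i v_i = 0}$ is the tangent space of the hyperplane containing the simplex. The key observation is that, by the corank assumption, at a Pareto point $x_0 = x^*(w)$ the space $\ker J^{\mathsf T} = \Set{\mu \in \R^m | \sum_i \mu_i (d f_i)_{x_0} = 0}$ of linear relations among the gradients is exactly one--dimensional and contains the weight $w$; hence $\ker J^{\mathsf T} = \R w$, and since $\sum_i w_i = 1 \ne 0$ we get $\ker J^{\mathsf T} \cap T\Delta = \Set{0}$. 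This yields both assertions of Part~2A at once: if $x^*(w) = x^*(w')$, then $w$ and $w'$ both lie in the line $\ker J^{\mathsf T}$ and have coordinate sum $1$, so $w = w'$ (injectivity); and $d x^*_w = - H^{-1} J^{\mathsf T}$ is injective on $T\Delta$ since $H^{-1}$ is invertible and $J^{\mathsf T}$ is injective on $T\Delta$ (immersion). As injectivity of the differential is an open condition, it persists on a collar $\Delta_\varepsilon^{m - 1}$ after shrinking $\varepsilon$, so the extension is a $C^{r - 1}$--immersion; being a continuous injection from a compact space, $x^*$ is a homeomorphism onto $X^*(f)$, proving~2A.

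For Part~2B the immersion property of $f \circ x^*$ follows from the same data: $d(f \circ x^*)_w = - J H^{-1} J^{\mathsf T}|_{T\Delta}$, and since $H^{-1}$ is positive definite the symmetric matrix $J H^{-1} J^{\mathsf T}$ has kernel exactly $\ker J^{\mathsf T}$, which meets $T\Delta$ trivially as before. Injectivity of $f \circ x^*$ comes from strong convexity directly: if $x_0 = x^*(w)$ and $x_1 = x^*(w')$ were distinct points with $f(x_0) = f(x_1)$, then strong convexity of each $f_i$ would give $f_i\prn{\tfrac12 (x_0 + x_1)} < f_i(x_0)$ for all $i$, so the midpoint would strictly dominate $x_0$ in the Pareto ordering, contradicting $x_0 \in X^*(f)$. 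Hence $f \circ x^*$ is a $C^{r - 1}$--immersion and a continuous injection on a compact set, i.e.\ a $C^{r - 1}$--diffeomorphism onto $f(X^*(f))$; composing with $(x^*)^{-1}$ shows that $f|_{X^*(f)}$ is a $C^{r - 1}$--embedding. (Alternatively, one could verify via \cref{T:criterion fold} that each point of $X^*(f)$ is a fold and invoke the immersivity of $f|_{\Crit(f)}$ there, but the positive-definiteness route above is more direct and does not require $n \ge m$.)

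The conceptual heart of the argument is the corank--$1$ computation identifying $\ker J^{\mathsf T}$ with the line spanned by the weight vector $w$, so that the normalization $\sum_i w_i = 1$ forces transversality to $T\Delta$; every statement in Part~2 reduces to this together with positive definiteness of $H$. The step I expect to be most delicate is not any single inequality but the bookkeeping in Part~1: upgrading the pointwise implicit function theorem solutions to one $C^{r - 1}$--map on an honest collar $\Delta_\varepsilon^{m - 1}$ of the simplex, including along the boundary faces, and confirming that the open conditions exploited in Part~2 (invertibility of the Hessian and injectivity of the differentials) indeed survive on such a collar after shrinking $\varepsilon$.
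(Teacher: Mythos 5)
Your Part~1 and Part~2A follow the paper's proof essentially verbatim: the same implicit-function-theorem construction of a $C^{r-1}$--extension of $x^*$ on a collar $\Delta^{m-1}_\varepsilon$, the same formula $dx^*_w=-H^{-1}J^{\mathsf T}|_{T\Delta}$ (the paper writes it in the coordinates $z_j=w_j$, $j\le m-1$, as $-A(z)\prn{(df_j)_{\widetilde{x^*}(z)}-(df_m)_{\widetilde{x^*}(z)}}$), and the same identification of the one--dimensional space of relations among the gradients with $\R w$ to get both injectivity of $x^*$ and injectivity of its differential. Where you genuinely depart from the paper is Part~2B. The paper proves that $f|_{X^*(f)}$ is an immersion by singularity theory: after splitting into the cases $n=m-1$ (where $f$ itself is an immersion on $X^*(f)$) and $n\ge m$, it normalizes $f$ by coordinate changes and uses the criterion of \cref{T:criterion fold} to show that every Pareto point is a fold, which forces $f|_{\Crit(f)}$ to be an immersion there. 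You instead compute $d(f\circ x^*)_w=-JH^{-1}J^{\mathsf T}|_{T\Delta}$ and observe that $\ker\prn{JH^{-1}J^{\mathsf T}}=\ker J^{\mathsf T}=\R w$ meets $T\Delta$ trivially because $H^{-1}$ is positive definite; this is correct, shorter, needs no case split on $n$ versus $m$, and reuses exactly the linear algebra already set up for 2A. Your injectivity argument for $f|_{X^*(f)}$ (midpoint strict domination from strong convexity) also differs from the paper's (which notes that $y$ with $f(y)=f(x^*(w))$ is another minimizer of $\sum_i w_if_i$ and invokes uniqueness from \cref{T:minimum}); both are sound. What the paper's longer route buys is the structural statement recorded in \cref{R:corank_fold} -- that under the corank hypothesis every Pareto point is a (definite) fold -- which is of independent interest and connects the result to the singularity-theoretic literature; your route buys economy and uniformity. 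The one point to keep an eye on, which you flag yourself, is that the embedding conclusion requires the immersion property on an honest collar $\Delta^{m-1}_\varepsilon$ (not just on $\Delta^{m-1}$) to match the paper's definition of $C^{r-1}$--diffeomorphism for manifolds with corners; since injectivity of $JH^{-1}J^{\mathsf T}|_{T\Delta}$ is an open condition and $\Delta^{m-1}$ is compact, shrinking $\varepsilon$ handles this exactly as in the paper's treatment of 2A.
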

Note that this theorem obviously holds for $m = 1$.
For this reason, in the rest of this section we will assume $m \ge 2$.

\cref{T:main theorem} follows from this theorem as follows:
It is easy to see that any subproblem of a strongly convex problem is again strongly convex.
In particular, by applying \cref{T:Pareto_convex} to each subproblem, we can show that the image of the restriction $x^*$ on $\Delta_I$ is equal to $X^*(f_I)$ for any $I \subseteq M$.
Thus a strongly convex problem is weakly simplicial.
We can further deduce from the assertion 2 of \cref{T:Pareto_convex} that a strongly convex problem is simplicial under the assumption on the coranks of differentials.

\begin{remark}\label{R:corank_fold}
The corank assumption in 2 implies that $n$ is greater than or equal to $m - 1$.
As we will show in the proof, under this assumption any point in $X^*(f)$ for a mapping $f$ is a fold\footnote{It is indeed a ``definite'' fold.} if $n \ge m$.
\end{remark}

\begin{remark}
In general, the mapping $x^*$ for a strongly convex problem (without the corank assumption) is not necessarily a diffeomorphism.
We will give an explicit example of such a problem with a non-injective $x^*$ in \cref{S:example1}.
\end{remark}

\begin{proof}[Proof of 1 in \cref{T:Pareto_convex}]
First of all, we can immediately deduce from \cref{T:chara-pareto} that $x^*$ is surjective.
Let $\delta'>0$ be a positive number and $g: \Delta_{\delta'}^{m - 1} \times \R^n \to \R^n$ be a $C^{r - 1}$--mapping defined by $g(w, x) = \sum_{k = 1}^m w_k (d f_k)_x$.
We can easily deduce from \cref{T:convex_property} that $x^*$ is an implicit function of the equation $g(w, x) = 0$ defined on $\Delta^{m - 1}$.
Differentiating $g$, we have $\frac{\partial g_i}{\partial x_j} = \sum_{k = 1}^m w_k \frac{\partial f_k}{\partial x_i \partial x_j}$.
Thus the matrix $\prn{\frac{\partial g_i}{\partial x_j}}_{1 \le i, j \le n}$ is equal to $\sum_{k = 1}^m w_k H(f_k)_x$, where $H(f_k)_x$ is the Hessian matrix of $f_k$ at $x$.
Since $f_k$ is strongly convex, the Hessian matrix $H(f_k)_x$ is positive definite (see \cref{T:criterion storngly convex Hessian}).
Thus, the matrix $\prn{\frac{\partial g_i}{\partial x_j}}_{1 \le i, j \le n}$ is regular on $\Delta^{m - 1}$.
By the implicit function theorem, for any $y \in \Delta^{m - 1}$ there exists an open neighborhood $U_y \subseteq \Delta_{\delta'}^{m - 1}$ and a (unique) $C^{r - 1}$--mapping $x^*_y: U_y \to \R^n$ such that $x^*_y(y) = x^*(y)$ and $g(w, x^*_y(w)) = 0$ for any $w \in U_y$.
We can further deduce from uniqueness of an implicit function that $x^*_y$ coincides with $x^*_{y'}$ on $U_y \cap U_{y'}$ for distinct $y, y' \in \Delta^{m - 1}$.
Since $U = \bigcup_{y \in \Delta^{m - 1}} U_y$ is an open neighborhood of $\Delta^{m - 1}$, one can take $\delta < \delta'$ so that $\Delta^{m - 1}_{\delta}$ is contained in $U$.\footnote{
If not, we can take $x_n \in U^c \cap \Delta^{m - 1}_{1 / n}$ for any $n \in \N$.
Since $\overline{\Delta^{m - 1}_1}$ is compact, $\Set{x_n}_{n \in \N}$ has a cluster point $x$, which is contained in $\Delta^{m - 1}$.
However, $x$ is also contained in $U^c$ since it is closed, contradicting the fact that $\Delta^{m - 1} \varsubsetneq U$.
}
We can define $\widetilde{x^*}: \Delta_{\delta}^{m - 1} \to \R^n$ by $\widetilde{x^*}(w) = x^*_{y} (w)$ for $w \in U_{y}$.
It is easy to see that $\widetilde{x^*}$ is a $C^{r - 1}$--mapping and is an extension of $x^*$.
Thus $x^*$ is a $C^{r - 1}$--mapping.
\end{proof}

\begin{proof}[Proof of 2.A in \cref{T:Pareto_convex}]
For $\varepsilon \ge 0$, we define a subset $D^{m - 1}_\varepsilon \varsubsetneq \R^{m - 1}$ by
\[
D^{m - 1}_\varepsilon = \Set{(z_1, \dots, z_{m - 1}) \in \R^{m - 1} | \sum_i z_i < 1 + \varepsilon, \hspace{.3em}z_i > -\varepsilon }.
\]
We denote the closure $\overline{D_0^{m - 1}}$ by $D^{m - 1}$.
It is easy to check that the projection $p: \Delta_\varepsilon^{m - 1} \to D_\varepsilon^{m - 1}$ defined by $p(w_1, \dots, w_m) = (w_1, \dots, w_{m - 1})$ is a diffeomorphism.
In what follows we will identify $\Delta_\varepsilon^{m - 1}$ with $D_\varepsilon^{m - 1}$ by $p$.

Since $\widetilde{x^*}$ constructed in the proof above is an implicit function of the equation $g(w, x) = 0$, the following equality holds:
\[
0 = \sum_{k = 1}^m w_k (d f_k)_{\widetilde{x^*}(w)} = \sum_{k = 1}^{m - 1} z_k (d f_k)_{\widetilde{x^*}(z)} + \prn{1 - \sum_{k = 1}^{m - 1} z_k} (d f_m)_{\widetilde{x^*}(z)},
\]
where $(z_1, \dots, z_{m - 1}) \in D_{\delta}^{m - 1}$ is a point corresponding to $w \in \Delta^{m - 1}_{\delta}$.
Differentiating the both sides of the equation above by $z_j$ $(j = 1, \dots, m - 1)$, we obtain the following equality:
\begin{align*}
0 = & \sum_{k = 1}^{m - 1} z_k \prn{H(f_k)_{\widetilde{x^*}(z)} \frac{\partial \widetilde{x^*}}{\partial z_j}} + (d f_j)_{\widetilde{x^*}(z)}\\
    & + \prn{1 - \sum_{k = 1}^{m - 1} z_k} \prn{H(f_m)_{\widetilde{x^*}(z)} \frac{\partial \widetilde{x^*}}{\partial z_j}} - (d f_m)_{\widetilde{x^*}(z)}.
\end{align*}
We thus obtain:
\begin{align*}
\frac{\partial \widetilde{x^*}}{\partial z_j}
&= -\prn{\sum_{k = 1}^{m - 1} z_k H(f_k)_{\widetilde{x^*}(z)} + \prn{1 - \sum_{k = 1}^{m - 1} z_k} H(f_m)_{\widetilde{x^*}(z)}}^{-1} \hspace{-.6em} \prn{(d f_j)_{\widetilde{x^*}(z)} - (d f_m)_{\widetilde{x^*}(z)}}\\
&= -A(z) \prn{(d f_j)_{\widetilde{x^*}(z)} - (d f_m)_{\widetilde{x^*}(z)}},
\end{align*}
where we denote the matrix $\prn{\sum_{k = 1}^{m - 1} z_k H(f_k)_{\widetilde{x^*}(z)} + \prn{1 - \sum_{k = 1}^{m - 1} z_k} H(f_m)_{\widetilde{x^*}(z)}}^{-1}$ by $A(z)$, which is positive definite for $z \in D^{m - 1}$.
Since $D^{m - 1}$ is compact and the corank of $d f_{\widetilde{x^*}(z)}$ is equal to $1$ for any $z \in D^{m - 1}$, by retaking $\delta$ if necessary, we can assume that the corank of $d f_{\widetilde{x^*}(z)}$ is equal to $1$ and $A(z)$ is positive definite, and thus regular, for any $z \in D_{\delta}^{m - 1}$.
(Note that the condition $\corank(d f_x) = 1$ is an open condition in $\Crit(f)$.)

We will show that the matrix
\begin{equation}\label{Eq:matrix dfi-dfm}
\begin{pmatrix}
(d f_1)_{\widetilde{x^*}(z)} - (d f_m)_{\widetilde{x^*}(z)} & \cdots & (d f_{m - 1})_{\widetilde{x^*}(z)} - (d f_m)_{\widetilde{x^*}(z)}
\end{pmatrix}
\end{equation}
has rank $m - 1$ for any $z \in D^{m - 1}$.
If not so, there exists $a_i \in \R$ with $(a_1, \dots, a_{m - 1}) \ne 0$ such that
\[
\sum_{i = 1}^{m - 1} a_i \prn{(d f_i)_{\widetilde{x^*}(z)} - (d f_m)_{\widetilde{x^*}(z)}} = 0.
\]
On the other hand, by the definition of $\widetilde{x^*}$, we obtain
\[
\sum_{j = 1}^{m - 1} z_j (d f_j)_{\widetilde{x^*}(z)} + \prn{1 - \sum_{i = 1}^{m - 1} z_i} (d f_m)_{\widetilde{x^*}(z)} = 0.
\]
Thus, two vectors $\prn{z_1, \dots, z_{m - 1}, 1 - \sum_{i = 1}^{m - 1} z_i}$ and $\prn{a_1, \dots, a_{m - 1}, -\sum_{i = 1}^{m - 1} a_i}$ are contained in $\ker d f_{\widetilde{x^*}(z)}$.
However, these are linearly independent and contradict the assumption $\corank \prn{d f_{\widetilde{x^*}(z)}} = 1$.
Therefore, the matrix in \cref{Eq:matrix dfi-dfm} has rank $m - 1$ for any $z \in D^{m - 1}$.
Since the condition that the matrix in \cref{Eq:matrix dfi-dfm} has rank $m - 1$ is an open condition for $z$, we can assume that this condition holds for any $D_{\delta}^{m - 1}$ by making $\delta$ sufficiently small.
The differential
\[
d \widetilde{x^*}_z = -A(z)
\prn{(d f_1)_{\widetilde{x^*}(z)} - (d f_m)_{\widetilde{x^*}(z)}, \dots, (d f_{m - 1})_{\widetilde{x^*}(z)} - (d f_m)_{\widetilde{x^*}(z)}}
\] 
also has rank $m - 1$ since $A(z)$ is regular for any $z \in D_{\delta}^{m - 1}$.

We next show that the mapping $x^*$ is injective.
Assume that $x^*(w)$ is equal to $x^*(w')$ for $w, w' \in \Delta^{m - 1}$.
Since the corank of $d f_{x^*(w)}$ is $1$ and $\sum_{j = 1}^m w_j (d f_j)_{x^*(w)} = 0$, we can obtain $\img(d f_{x^*(w)}) = \gen{w}^{\perp}$.
In the same way, we can also prove that $\img(d f_{x^*(w')})$ is equal to $\gen{w'}^{\perp}$.
From the assumption, we can deduce that $\gen{w}^{\perp}$ is equal to $\gen{w'}^{\perp}$
and thus $w = w'$.

We have shown that $x^*$ is an injective immersion.
Since $\Delta^{m - 1}$ is compact, $x^*$ is a homeomorphism and thus a diffeomorphism to its image, which is equal to $X^*(f)$.
\end{proof}

\begin{proof}[Proof of 2.B in \cref{T:Pareto_convex}]
We first prove that $f|_{X^*(f)}$ is injective.
Let $w, z \in \Delta^{m - 1}$, $x = x^*(w)$ and $y = x^*(z)$.
Suppose that $f(x)$ is equal to $f(y)$.
Then $\prn{\sum_{i = 1}^m w_i f_i}(x)$ is also equal to $\prn{\sum_{i = 1}^m w_i f_i}(y)$.
Since the function $\sum_{i = 1}^m w_i f_i$ is strongly convex, the point minimizing $\sum_{i = 1}^m w_i f_i$ is unique (see \cref{T:minimum}).
Thus, $x$ is equal to $y$.

As we noted in \cref{R:corank_fold}, the corank assumption implies that $n$ is greater than or equal to $m - 1$.
If $n = m - 1$, this assumption further implies that $f$ is an immersion at any point in $X^*(f)$.
The restriction $f|_{X^*(f)}$ is thus an embedding since any injective immersion on a compact manifold is an embedding.
In what follows we will assume $n \ge m $.

We next show that any point $x \in X^*(f)$ is a fold of $f$.
The following transformations preserve strong convexity of $f$:
\begin{itemize}
\item $(f_1, \dots, f_m) \mapsto (f_{\sigma(1)}, \dots, f_{\sigma(m)}) \quad (\sigma \in \mathfrak{S}_m)$,
\item $(f_1, \dots, f_m) \mapsto (f_1, \dots, f_m + \alpha f_i) \quad (\alpha > 0, i = 1, \dots, m - 1)$, and
\item linear transformations of the source of $f$,
\end{itemize}
where $\mathfrak{S}_m$ is the symmetric group of degree $m$.
By applying these transformations if necessary, we can assume the followings:
\[
(d f_m)_x = 0,\quad
\prn{\frac{\partial f_i}{\partial x_j}(x)}_{1 \le i, j \le m - 1} = I_{m - 1},\quad
\frac{\partial f_i}{\partial x_j}(x) = 0 \ 
\begin{pmatrix}
  i = 1, \dots, m\\
  j = m, \dots, n
\end{pmatrix}.
\]
Let $\lambda_f: \R^n \to \R^{n - m + 1}$ be the mapping defined in \cref{S:fold}.
By \cref{T:criterion fold}, it suffices to show the followings:
\begin{enumerate}[(A)]
\item the rank of $(d \lambda_f)_x$ is $n - m + 1$,
\item $\ker (d \lambda_f)_x \oplus \ker d f_x = T_x \R^n$.
\end{enumerate}
By the assumptions above, we can calculate $(d \lambda_f)_x$ as follows:
\begin{align*}
(d \lambda_f)_x
&= \pm \prn{\dfrac{\partial^2 f_m}{\partial x_j \partial x_{m - 1+i}}(x)}_{1 \le i \le n - m + 1, 1 \le j \le n}\\
&= \pm \prn{
\begin{array}{c|c}
\text{\rule{0pt}{15pt} \Large{0}}_{(n - m + 1) \times (m - 1)} & \emph{\Large{I}}_{n - m + 1}
\end{array}
}
H(f_m)_x.
\end{align*}
Since $f_m$ is strongly convex, the Hessian matrix $H(f_m)_x$ is positive definite, in particular regular by \cref{T:criterion storngly convex Hessian}.
Thus, the condition (A) holds.
The above calculation also implies the following equality:
\[
\ker(d \lambda_f)_x = \gen{H(f_m)_x^{-1} e_1, \dots, H(f_m)_x^{-1} e_{m - 1}}.
\]
Let $v \in \ker(d \lambda_f)_x \cap \ker(d f)_x$.
From the equality above, we can find $w \in \R^{m - 1} \times \Set{0} \varsubsetneq \R^m$ such that $v = H(f_m)_x^{-1} w$.
Thus the following holds:
\[
0 = d f_x(v) =
\begin{pmatrix}
  I_{m - 1} & 0\\
  0         & 0
\end{pmatrix}
H(f_m)_x^{-1} w.
\]
We can deduce the following from this equality:
\[
    {}^t w H(f_m)^{-1}_x w = 0.
\]
Since $H(f_m)$ is positive definite by \cref{T:criterion storngly convex Hessian}, $w$ is equal to $0$.
Since the corank of $d f_x$ is equal to $1$, we can deduce the following from the condition (A):
\[
    \dim \ker(d \lambda_f)_x + \dim \ker(d f_x) = n.
\]
Thus the condition (B) also holds.
We can eventually conclude that $f|_{X^*(f)}$ is an immersion.
\end{proof}

\subsection{Examples}\label{S:example}
One of the most simple and representative instances of strongly convex problem is the multiobjective location problem under the Euclidean norm.
It is well known that the Pareto set (resp.\ the Pareto front) of this problem is a convex hull of minimizing points (resp.\ their values) of individual objective functions~\cite{Kuhn1967}.
Thus, if these minimizing points are in general position, then the convex hull becomes a simplex and this problem is a $C^0$--simplicial problem.

In this section we will show that in the strongly convex case, the condition that minimizing points are in general position is no longer necessary nor sufficient to ensure $C^r$--simpliciality, and the corank assumption is still essential to determine the topology of the Pareto set and the Pareto front.
To this end, we will give two examples of strongly convex mappings from $\R^3$ to $\R^3$, and discuss the configurations of Pareto sets of them.
As we mentioned in the beginning of \cref{S:Topology Paretoset}, for any strongly convex mapping $f: \R^3 \to \R^3$ we can define a mapping $x^*: \Delta^2 \to X^*(f)$.
The first example (given in \cref{S:example1}) has a corank $2$ differential at a point in the Pareto set, and the corresponding $x^*$ is not a diffeomorphism (despite the fact that the minimizing points of the three component functions are in general position).
This example implies that we cannot drop the corank assumption in the assertion 2 of \cref{T:Pareto_convex}.
The second example (given in \cref{S:example2}) satisfies the corank assumption, and thus the corresponding $x^*$ is a diffeomorphism (although the minimizing points of the three component functions are \emph{not} in general position).

\begin{example}[general position with corank $2$]\label{S:example1}
Define a mapping $f = (f_1, f_2, f_3): \R^3 \to \R^3$ as follows:
\begin{align*}
    f_1(x, y, z) & = x^2 + y^2 + z^2,\\
    f_2(x, y, z) & = x + y + x^2 + y^2 + z^2,\\
    f_3(x, y, z) & = -(x + y) + x^2 + 2y^2 + z^2.
\end{align*}
The mapping $f$ is strongly convex.
We will check that $X^*(f)$ contains a singularity of corank $2$,
and is not diffeomorphic to $\Delta^2$.
The differentials at $p = (x, y, z) \in \R^3$ are 
\begin{align*}
    d f_{1, p} & = (     2x,      2y, 2z)\\
    d f_{2, p} & = ( 1 + 2x,  1 + 2y, 2z)\\
    d f_{3, p} & = (-1 + 2x, -1 + 4y, 2z),
\end{align*}
and thus the corank of $d f_{\boldsymbol{0}}$ is $2$.
Since $f$ is strongly convex, the mapping $x^*: \Delta^2 \to X^*(f)$ is surjective by the assertion 1 of \cref{T:Pareto_convex}.
Regarding $\Delta^2$ as $D^2 = \Set{(w_2, w_3) | w_2, w_3 \ge 0, w_2 + w_3 \le 1}$,
we obtain
\[
    x^*(w_2, w_3) = \prn{-\frac{w_2 - w_3}{2}, -\frac{w_2 - w_3}{2(1 + w_3)}, 0}.
\]
Obviously $x^*$ maps 
the line defined by $w_2 - w_3 = 0$ in $\Delta^2 = D^2$
into single point (the origin),
while it is injective at points outside the above line.
Thus $X^*(f)( = x^*(\Delta^2))$ is not diffeomorphic to $\Delta^2$.
\Cref{F:ParetoSet-corank2} describes the Pareto set of $f$, together with the contours of the functions $f_1$ (red), $f_2$ (blue) and $f_3$ (green).
\begin{figure}[htbp]
    \centering
    \includegraphics[height=60mm]{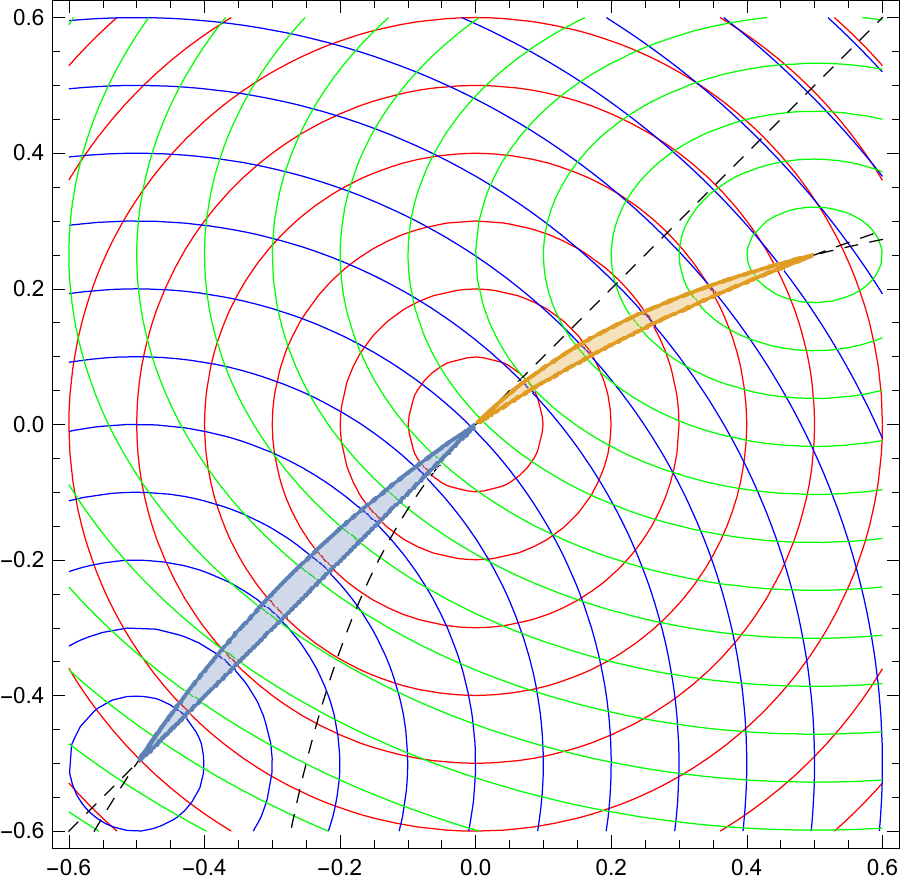}
    \caption{The Pareto set of $f$. The union of two domains colored with orange and blue is the Pareto set.} \label{F:ParetoSet-corank2}
\end{figure}

For $\varepsilon \in \R - \Set{0}$, we define another mapping $h_\varepsilon: \R^3 \to \R^3$ as follows:
\[
    h_\varepsilon (x, y, z) = (f_1(x, y, z) + \varepsilon z, f_2(x, y, z), f_3(x, y, z)).
\]
Note that the mapping $h_\varepsilon$ is a linear perturbation of $f$.
It is easy to verify that the mapping $h_\varepsilon$ is strongly convex and never has corank $2$ critical points.
Thus the problem minimizing $h_\varepsilon$ is simplicial.
We will see in \cref{sec:generic} that in general any strongly convex problem becomes simplicial after a generic linear perturbation (see \cref{T:Pareto_convex generic}).
\end{example}

\begin{example}[non-general position without corank $2$]\label{S:example2}
We define a mapping $f = (f_1, f_2, f_3): \R^3 \to \R^3$ as follows:
\begin{align*}
f_1(x, y, z) & =   x^2      + (-y + x)^2     + z^2,\\
f_2(x, y, z) & = 2(x - 1)^2 + (-y + x - 1)^2 + z^2,\\
f_3(x, y, z) & =  (x - 2)^2 + ( y + x - 2)^2 + z^2.
\end{align*}
The mapping $f$ is strongly convex.
We will check that $f$ satisfies the assumption in the assertion 2 of \cref{T:Pareto_convex} (although the minimizing points of $f_1, f_2, f_3$ are not in general position).
Let $p = (x, y, z)$ be a point in $X^*(f)$.
It is easy to see that $z$ is equal to $0$.
Thus the differentials at $p$ are calculated as follows:
\begin{align*}
\prn{d f_1}_p & = (4x - 2y,     -2x + 2y,     0),\\
\prn{d f_2}_p & = (6x - 2y - 6, -2x + 2y + 2, 0),\\
\prn{d f_3}_p & = (4x + 2y - 8,  2x + 2y - 4, 0).
\end{align*}
Suppose that the corank of $d f_p$ is greater than or equal to $2$.
Then the following equalities hold:
\begin{align}
&0 = \det
\begin{pmatrix}
  4x - 2y     & -2x + 2y\\
  6x - 2y - 6 & -2x + 2y + 2
\end{pmatrix}
= 4 \prn{x - \frac{y}{2} - \frac{1}{2}}^2 - \prn{y - 3}^2 + 8,\label{Eq:hyperbola1}\\
&0 = \det
\begin{pmatrix}
  4x - 2y     & -2x + 2y \\
  4x + 2y - 8 &  2x + 2y - 4
\end{pmatrix}
= 2 \prn{8 \prn{x - 1}^2 - 4 \prn{y - \frac{3}{2}}^2 + 1}.\label{Eq:hyperbola2}
\end{align}
These equalities give rise to two hyperbolas given in \cref{F:hyperbolas} (the red hyperbola is defined by \cref{Eq:hyperbola1}, while the blue one is defined by \cref{Eq:hyperbola2}).
As shown in \cref{F:hyperbolas}, the two hyperbolas intersect at two points.
One is the origin $\boldsymbol{0}$ and let $q = (x', y')$ be the other.
Since the rank of $\prn{(d f_1)_{\boldsymbol{0}},(d f_2)_{\boldsymbol{0}},(d f_3)_{\boldsymbol{0}}}$ is $2$, $(x, y)$ is not equal to $(0, 0)$.
Thus we obtain $(x, y) = (x', y')$.
However, since $y' - x' > 1$ and $x', y' > 1$, all of the three values $-2x' + 2y', -2x' + 2y' + 2$ and $2x' + 2y' - 4$ are greater than $0$, contradicting \cref{T:chara-pareto}.
Hence we can conclude that there is no point $p \in X^*(f)$ with $\corank(d f_p) \ge 2$.
\Cref{F:ParetoSet} describes the Pareto set of $f$, together with the contours of the functions $f_1$ (red), $f_2$ (blue) and $f_3$ (green).
\begin{figure}[htbp]
\centering
\subfigure[Two hyperbolas.]{\includegraphics[height=60mm]{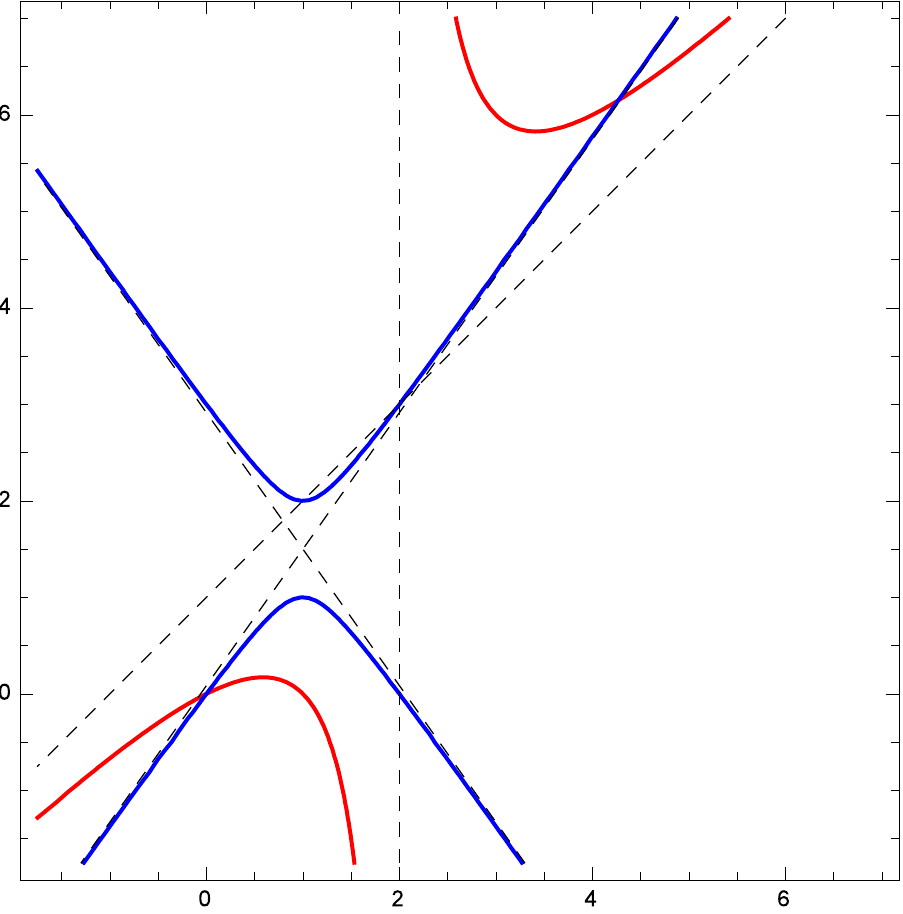}\label{F:hyperbolas}}
\subfigure[The Pareto set of $f$ (projected on the $xy$--plane).]{\includegraphics[height=60mm]{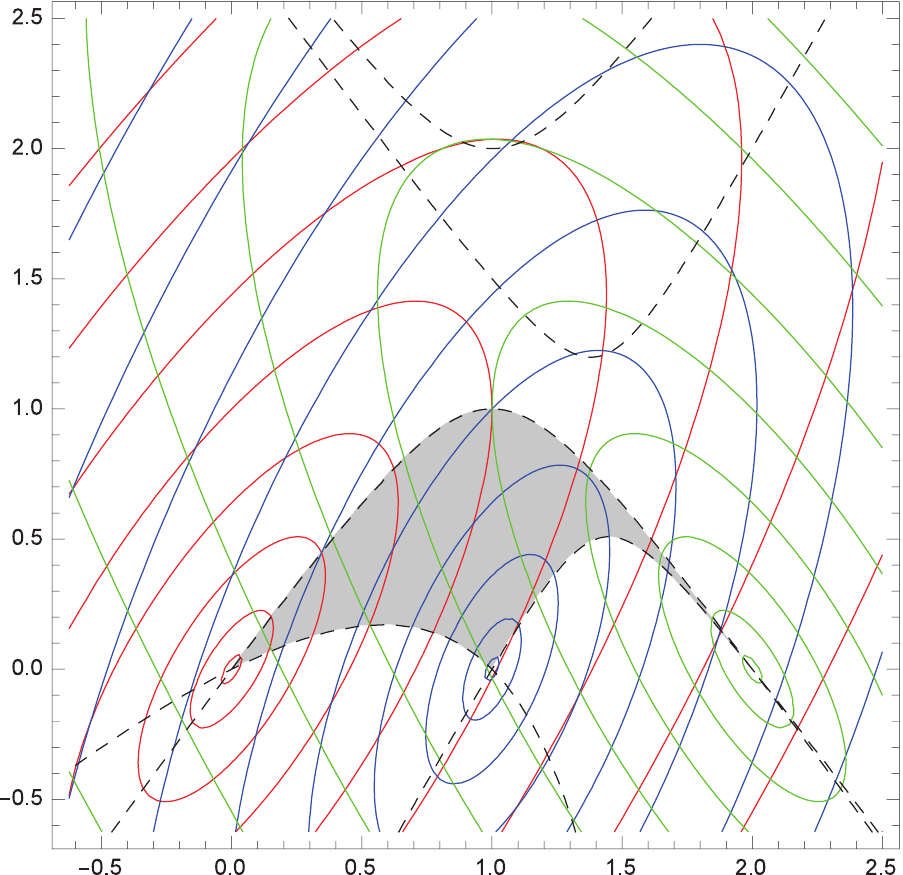}\label{F:ParetoSet}}
\caption{The dotted lines in \cref{F:hyperbolas} are asymptotes of the hyperbolas.} 
\label{F:MathematicaGraph1}
\end{figure}
\end{example}

\section{Generic linear perturbations of strongly convex mappings}\label{sec:generic}
In this section, we will investigate the multiobjective optimization problem minimizing a generically linearly perturbed strongly convex mapping.
Let $\mathcal{L}(\R^n, \R^m)$ be the space consisting of all linear mappings from $\R^n$ into $\R^m$.
In what follows we will regard $\mathcal{L}(\R^n, \R^m)$ as the Euclidean space $(\R^n)^m$ in the obvious way.
The purpose of this section is to show the following theorem:
\begin{theorem}\label{T:Pareto_convex generic}
Let $f: \R^n \to \R^m$ $(n \ge m)$ be a strongly convex $C^r$--mapping $(2 \le r \le \infty)$.
If $n - 2m + 4 > 0$, then there exists a subset $\Sigma \varsubsetneq \mathcal{L}(\R^n, \R^m)$ with Lebesgue measure zero such that for any $\pi \in \mathcal{L}(\R^n, \R^m) - \Sigma$ the mapping $f + \pi$ never has differential with corank greater than $1$ on its Pareto set.
In particular, the multiobjective optimization problem minimizing $f + \pi: \R^n \to \R^m$ is $C^{r - 1}$--simplicial.
\end{theorem}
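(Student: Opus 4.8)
The plan is to prove the stronger statement that, for a generic $\pi$, the mapping $f + \pi$ admits \emph{no} point at which the differential has corank greater than $1$ — not merely on the Pareto set, but on all of $\R^n$. Granting this, the conclusion follows quickly. A linear map has vanishing Hessian, so each component $f_i + \pi_i$ has the same Hessian as $f_i$; hence $f + \pi$ is again a strongly convex $C^r$--mapping by \cref{T:criterion storngly convex Hessian}. Moreover, at any Pareto point $x = x^*(w)$ the relation $\sum_{i=1}^m w_i (d(f_i + \pi_i))_x = 0$ with $w \in \Delta^{m-1}$ (see \cref{T:chara-pareto}) exhibits a nontrivial linear dependence among the rows of $d(f + \pi)_x$, so $\corank d(f + \pi)_x \ge 1$ there. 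Combined with the genericity statement this forces the corank to equal exactly $1$ on $X^*(f + \pi)$, and \cref{T:main theorem} then yields that the problem minimizing $f + \pi$ is $C^{r-1}$--simplicial.

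To produce the genericity statement I would study the evaluation map
\[
\Phi \colon \R^n \times \mathcal{L}(\R^n, \R^m) \to \mathcal{L}(\R^n, \R^m), \qquad \Phi(x, \pi) = d f_x + \pi,
\]
which records the differential $d(f + \pi)_x$ (note $d\pi_x = \pi$ since $\pi$ is linear). As $f$ is $C^r$ with $r \ge 2$, the map $\Phi$ is of class $C^{r-1}$. The key observation is that $\Phi$ is a submersion: for fixed $x$ the partial map $\pi \mapsto d f_x + \pi$ is a translation, hence already surjective on tangent spaces. Consequently $\Phi$ is transverse to every submanifold of the target, in particular to each stratum of the set $\Set{A \in \mathcal{L}(\R^n, \R^m) | \corank A \ge 2}$, which is a finite disjoint union of the smooth determinantal loci $\Set{A | \corank A = k}$ for $k = 2, \dots, m$. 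By the parametric transversality theorem (cf.~\cite{Golubitsky1974}) there is then a subset $\Sigma \varsubsetneq \mathcal{L}(\R^n, \R^m)$ of Lebesgue measure zero such that, for every $\pi \notin \Sigma$, the map $\phi_\pi \colon x \mapsto d f_x + \pi$ is transverse to each of these strata simultaneously.

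Finally I would run the dimension count. The stratum $\Set{A | \corank A = k}$ has codimension $k(n - m + k)$ in $\mathcal{L}(\R^n, \R^m)$, which for $k \ge 2$ is minimized at $k = 2$ with value $2(n - m + 2)$. Transversality of $\phi_\pi$ to a stratum of codimension $c$ forces the preimage of that stratum to be empty or a submanifold of $\R^n$ of dimension $n - c$. Since the hypothesis $n - 2m + 4 > 0$ is exactly the assertion $n - 2(n - m + 2) < 0$, the expected dimension is negative for the $k = 2$ stratum, and a fortiori for every $k \ge 2$; hence each preimage is empty. Thus for $\pi \notin \Sigma$ the mapping $f + \pi$ has corank at most $1$ at every point of $\R^n$, which is what was needed.

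The step I expect to require the most care is the transversality input: one must confirm that the constant-in-$x$ perturbation direction $\pi$ alone spans the target (so that $\Phi$ is genuinely a submersion), that the determinantal loci form a stratification by smooth submanifolds of the stated codimensions, and that the differentiability class $r - 1 \ge 1$ meets the hypotheses of the parametric transversality theorem for these codimensions — here the Sard-type requirement is mild precisely because the relevant expected dimension $n - 2(n - m + 2)$ is negative. The remaining bookkeeping, namely the strong convexity of $f + \pi$ and the lower bound $\corank \ge 1$ on the Pareto set, is routine.
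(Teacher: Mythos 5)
Your proposal is correct and takes essentially the same route as the paper: reduce to showing that a generic linear perturbation makes the (1-jet of the) perturbed map transverse to the corank-$k$ strata for $k \ge 2$, whose codimension $k(n-m+k) \ge 2(n-m+2)$ exceeds $n$ precisely when $n-2m+4>0$, so that the preimages are empty and the corank on the Pareto set (where it is at least $1$ by \cref{T:chara-pareto}) equals $1$, whence \cref{T:main theorem} applies. The only difference is that the paper imports the transversality statement as a black box from \cite{Ichiki-generic} (\cref{T:generic_transversality}), whereas you re-derive it from the submersivity of the evaluation map $(x,\pi)\mapsto df_x+\pi$ together with parametric transversality; both are valid.
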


We begin with observing that strong convexity is preserved under linear perturbations.

\begin{lemma}\label{T:generic_multi}
Let $f: \R^n \to \R^m$ be a strongly convex mapping.
Then, for any $\pi \in \mathcal{L}(\R^n, \R^m)$, the mapping $f + \pi: \R^n \to \R^m$ is also a strongly convex mapping.
\end{lemma}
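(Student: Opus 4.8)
The plan is to reduce to a statement about each coordinate function and then verify the definition of strong convexity directly, using that a linear functional meets the convexity inequality with equality. Writing $\pi = (\pi_1, \dots, \pi_m)$, note that $f + \pi = (f_1 + \pi_1, \dots, f_m + \pi_m)$; since a mapping is strongly convex precisely when each of its components is, it suffices to show that $f_i + \pi_i$ is a strongly convex function for every $i \in M$.

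Fix such an $i$. As $f_i$ is strongly convex, there is $\alpha > 0$ so that
\[
f_i(tx + (1 - t)y) \le t f_i(x) + (1 - t) f_i(y) - \frac{1}{2}\alpha t(1 - t)\norm{x - y}^2
\]
for all $x, y \in \R^n$ and all $t \in [0, 1]$. The one observation needed is that the linear functional $\pi_i$ satisfies $\pi_i(tx + (1 - t)y) = t\pi_i(x) + (1 - t)\pi_i(y)$ identically. Adding this equality to the inequality above yields
\[
(f_i + \pi_i)(tx + (1 - t)y) \le t(f_i + \pi_i)(x) + (1 - t)(f_i + \pi_i)(y) - \frac{1}{2}\alpha t(1 - t)\norm{x - y}^2,
\]
which is exactly the defining inequality for $f_i + \pi_i$ with the same modulus $\alpha$. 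Since $i$ was arbitrary, $f + \pi$ is strongly convex.

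I do not expect any genuine obstacle: the entire content is that linearity leaves the quadratic defect term $-\tfrac{1}{2}\alpha t(1 - t)\norm{x - y}^2$ untouched, so the same $\alpha$ witnesses strong convexity before and after the perturbation. It is worth emphasizing that this argument uses only the defining inequality and therefore requires no differentiability of $f$; in particular there is no need to appeal to the Hessian criterion of \cref{T:criterion storngly convex Hessian}, which would only establish the same conclusion under the stronger hypothesis that $f$ is of class $C^2$ (where one would instead note that the Hessian of $\pi$ vanishes, leaving the minimal eigenvalue bound of \cref{T:criterion storngly convex Hessian} intact).
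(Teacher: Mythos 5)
Your proof is correct and is essentially the same as the paper's: both reduce to the scalar case and use the fact that a linear functional satisfies the convexity inequality with equality, so the quadratic defect term survives with the same modulus $\alpha$. The only cosmetic difference is that the paper phrases the cancellation as a chain of equalities for the difference $t(f+\pi)(x)+(1-t)(f+\pi)(y)-(f+\pi)(tx+(1-t)y)$, whereas you add the linearity identity to the strong convexity inequality directly.
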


\begin{proof}[Proof of \cref{T:generic_multi}]
Obviously, it is sufficient to show the statement under the assumption that $f$ is a function (i.e.~$m = 1$).
For $x, y \in \R^n$ and $t \in [0, 1]$, the following holds:
\begin{align*}
&  t \prn{(f + \pi)(x)} + (1-t)\prn{(f + \pi)(y)} - (f + \pi)(tx + (1 - t)y)\\ 
&= t \prn{f(x) + \pi(x)} + (1-t)\prn{f(y) + \pi(y)} - f(tx + (1 - t)y) - \pi(tx + (1 - t)y)\\ 
&= tf(x) + (1 - t)f(y) - f(tx + (1 - t)y),
\end{align*}
where the last equality holds since $\pi$ is linear.
Since $f$ is strongly convex, there exists $\alpha > 0$ satisfying the following inequality for any $x, y \in \R^n$ and $t \in [0, 1]$:
\[
    tf(x) + (1 - t)f(y) - f(tx + (1 - t)y) \ge \frac{1}{2} \alpha t(1 - t) \norm{x - y}^2.
\]
Hence, the mapping $f + \pi$ is also strongly convex.
\end{proof}

Before proving \cref{T:Pareto_convex generic}, we will briefly review the result in \cite{Ichiki-generic} needed here.
Let $S_k \varsubsetneq J^1(\R^n, \R^m)$ be the subset defined in \cref{S:fold}.
It is known that $S_k$ is a submanifold of $J^1(\R^n, \R^m)$ satisfying the following (see \cite{Golubitsky1974}):
\[
    \codim S_k = (n - v + k)(m - v + k)
\]
where $\codim S_k = \dim J^1(\R^n, \R^m) - \dim S_k$ and $v = \min \Set{n, m}$.
The following lemma is merely a special case of \cite[Theorem 1]{Ichiki-generic}:

\begin{lemma}[cf.~\cite{Ichiki-generic}]\label{T:generic_transversality}
Let $f: \R^n \to \R^m$ be a $C^r$--mapping.
Let $k$ be an integer satisfying $1 \le k \le \min \Set{n, m}$.
If 
$r > \max \Set{n - \codim S_k, 0} + 1$, 
then there exists a subset $\Sigma \varsubsetneq \mathcal{L}(\R^n, \R^m)$ with Lebesgue measure zero such that 
for any $\pi \in \mathcal{L}(\R^n, \R^m) - \Sigma$, the mapping 
$j^1(f + \pi): \R^n \to J^1(\R^n, \R^m)$ is transverse to $S_k$.
\end{lemma}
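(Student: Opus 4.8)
The plan is to derive the lemma from the parametric transversality theorem together with the sharp form of Sard's theorem, exploiting the fact that linear perturbations suffice to move the $1$-jet in the directions relevant to the corank stratum. First I would introduce the evaluation map
\[
\Phi \colon \mathcal{L}(\R^n, \R^m) \times \R^n \to J^1(\R^n, \R^m), \qquad \Phi(\pi, x) = j^1(f + \pi)(x).
\]
Identifying $J^1(\R^n, \R^m)$ with the product $\R^n \times \R^m \times \mathcal{L}(\R^n, \R^m)$ (source point, value, differential), one has explicitly $\Phi(\pi, x) = (x,\, f(x) + \pi(x),\, (d f)_x + \pi)$, since $\pi$ is linear and hence $(d \pi)_x = \pi$. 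Because the last slot involves only first derivatives of $f$, the map $\Phi$ is of class $C^{r - 1}$, and it is affine (in particular $C^\infty$) in the parameter $\pi$.

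The geometric heart of the argument is to show that $\Phi$ is transverse to $S_k$ at every point of its domain, regardless of $f$. Here I would use that, under the product identification above, $S_k$ has the product form $\R^n \times \R^m \times \Sigma_k$, where $\Sigma_k \subsetneq \mathcal{L}(\R^n, \R^m)$ is the (smooth) locus of linear maps of corank $k$; consequently $T_p S_k$ contains the entire $\R^n \oplus \R^m$ factor at any $p \in S_k$. It therefore suffices to check that $\img(d \Phi)$ surjects onto the remaining $\mathcal{L}(\R^n, \R^m)$ factor. This is immediate from the partial derivative in the parameter direction: differentiating $\Phi$ with respect to $\pi$ sends $\delta \pi$ to $(0,\, \delta \pi(x),\, \delta \pi)$, whose third component runs over all of $\mathcal{L}(\R^n, \R^m)$. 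Hence $\img(d \Phi_{(\pi, x)}) + T_p S_k = T_p J^1(\R^n, \R^m)$, i.e.\ $\Phi \pitchfork S_k$, and no hypothesis on $f$ beyond its differentiability class is needed.

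With total transversality established, I would pass to individual perturbations via the standard regular-value reformulation of parametric transversality. The preimage $P = \Phi^{-1}(S_k)$ is then a $C^{r - 1}$ submanifold of $\mathcal{L}(\R^n, \R^m) \times \R^n$ of codimension $\codim S_k$, so that $\dim P - \dim \mathcal{L}(\R^n, \R^m) = n - \codim S_k$. A short linear-algebra argument (the parametric transversality lemma) shows that for a fixed $\pi$ the map $j^1(f + \pi)$ is transverse to $S_k$ precisely when $\pi$ is a regular value of the restricted projection $\mathrm{pr} \colon P \to \mathcal{L}(\R^n, \R^m)$; note that any $\pi \notin \mathrm{pr}(P)$ is automatically (vacuously) good. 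Thus the desired exceptional set $\Sigma$ is contained in the critical values of $\mathrm{pr}|_P$. Invoking Sard's theorem for the $C^{r - 1}$ map $\mathrm{pr}|_P$, whose domain-minus-target dimension equals $n - \codim S_k$, shows that these critical values form a Lebesgue-null set exactly under $r - 1 > \max\{n - \codim S_k,\, 0\}$, which is the stated hypothesis.

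The step I expect to require the most care is this last one: the whole role of the hypothesis on $r$ is to make the differentiability class $C^{r - 1}$ of $\Phi$ (one derivative is lost in taking the $1$-jet) match the threshold demanded by the sharp version of Sard's theorem for $\mathrm{pr}|_P$, whose fibre dimension is $n - \codim S_k$. Keeping this bookkeeping exact is precisely what upgrades the conclusion from a merely residual set of good parameters, which the softer $C^\infty$ jet-transversality machinery would yield, to a set whose complement $\Sigma$ has Lebesgue measure zero, as asserted.
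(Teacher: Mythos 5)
Your proof is correct, but it is worth noting that the paper does not actually prove this lemma at all: it quotes it as ``merely a special case of Theorem~1'' of the cited work of Ichiki, so the measure-theoretic content is entirely outsourced. What you have written is, in effect, a self-contained proof of that special case, and every step checks out: the identification $J^1(\R^n,\R^m)\cong \R^n\times\R^m\times\mathcal{L}(\R^n,\R^m)$ under which $S_k$ becomes the product $\R^n\times\R^m\times\Sigma_k$ (the corank condition only constrains the derivative slot); the observation that the parameter derivative of $\Phi$ hits the full $\mathcal{L}(\R^n,\R^m)$ factor, so that $\Phi$ is transverse to $S_k$ with no hypothesis on $f$ beyond $r\ge 2$; the reduction, via the parametric transversality lemma, to regular values of the projection $\mathrm{pr}\colon \Phi^{-1}(S_k)\to\mathcal{L}(\R^n,\R^m)$; and the application of the sharp ($C^q$) Sard theorem to a $C^{r-1}$ map whose domain-minus-target dimension is $n-\codim S_k$, which is exactly where the hypothesis $r>\max\{n-\codim S_k,0\}+1$ is consumed and which is what yields a Lebesgue-null exceptional set rather than the merely residual one given by soft jet-transversality. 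The trade-off between the two treatments is clear: the paper's citation keeps its Section~4 short and lets it invoke a statement valid for general submanifolds of the jet space, whereas your argument exploits the special product structure of $S_k$ (which makes total transversality of the evaluation map automatic under purely linear perturbations) and has the virtue of making visible, inside the paper itself, precisely why the differentiability threshold in the hypothesis is the right one.
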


\begin{proof}[Proof of \cref{T:Pareto_convex generic}]
In the case $m = 1$, it is clearly seen that \cref{T:Pareto_convex generic} holds.
Hence, we will consider the case $m \ge 2$.
Since $n \ge m$, the codimension of $S_2$ is equal to $2(n - m + 2)$.
By the assumption $n - 2m + 4 > 0$, we can obtain the inequality $\codim S_2 > n$.
Let $k$ be an integer with $2 \le k \le m$.
It follows that 
\[
    n - \codim S_k \le n - \codim S_2 < 0.
\]
In particular, for a mapping $g: \R^n \to \R^m$, transversality of $j^1 g$ to $S_k$ is equivalent to the condition that $j^1 g(\R^n) \cap S_k = \emptyset$, that is, $g$ has no corank $k$ critical points (see \cite[Ch.~II, Proposition 4.2]{Golubitsky1974}).
Furthermore, the following inequality holds:
\[
    r \ge 2 > \max \Set{n - \codim S_k, 0} + 1.
\]
We can deduce from \cref{T:generic_transversality}, together with the observations above, that there exists $\Sigma_k \varsubsetneq \mathcal{L}(\R^n, \R^m)$ with Lebesgue measure zero such that the mapping 
$f + \pi$ has no corank $k$ critical points for any $\pi \in \mathcal{L}(\R^n, \R^m) - \Sigma_k$.
Set $\Sigma = \bigcup_{l = 2}^m \Sigma_l \varsubsetneq \mathcal{L}(\R^n, \R^m)$, which also has Lebesgue measure zero.
Lastly, we can easily verify that $\Sigma$ satisfies the conditions in \cref{T:Pareto_convex generic}.
\end{proof}

\begin{remark}\label{R:assumption dimensions}
We cannot drop the assumption $n-2m+4>0$ in \cref{T:Pareto_convex generic}. 
Let
$G(x) = \prn{g_1(x) - x_3, g_2(x) - x_4, g_1(x) + x_3, g_2(x) + x_4}$ be a map from 
\begin{math}
\R^4
\end{math}
to
\begin{math}
\R^4
\end{math}, where
\begin{align*}
    g_1(x) &= x_1^2 + x_3 x_2 + \frac{1}{2} \prn{x_2^2 + x_4 x_1} + x_3^2 + x_4^2, \\
    g_2(x) &= x_2^2 + x_4 x_1 + \frac{1}{2} \prn{x_1^2 + x_3 x_2} + x_3^2 + x_4^2. \label{eq_corank2}
\end{align*}
The mapping $G$ is strongly convex and has a corank $2$ critical point at the origin. 
In what follows, we will see that the Pareto set of any small linear perturbation of $G$ contains a corank $2$ critical point. 
(Note that the inequality $n - 2m + 4 > 0$ does not hold when $n = m = 4$.)

The Jacobi matrix of $G$ at the origin is 
$d G_0 =
\prn{\begin{smallmatrix}
O & O \\
-I  &I
\end{smallmatrix}}
$, where $I$ and $O$ are $2 \times 2$ unit matrix and zero matrix, respectively.
Define the cokernel of $dG_0$ as
\begin{equation*}
\coker dG_0 = \Set{\prn{v_1,v_2,v_3,v_4} \in \R^4 | \sum_{i=1}^4 v_i \prn{dG_i}_0 = 0},
\end{equation*}
then, the subspace $\coker dG_0$ is equal to $\gen{(1, 0, 1, 0),\ (0, 1, 0, 1)}$. 
In particular, the subspace $\coker dG_0$ intersects with the interior of $\Delta^3$.  

For $(x, \pi) \in \R^4 \times \mathcal{L}(\R^4, \R^4)$, put $d(G + \pi)_x =
\prn{\begin{smallmatrix}
A(x, \pi) & B(x, \pi) \\
C(x, \pi) & D(x, \pi)
\end{smallmatrix}}$
where $A(x, \pi)$, $B(x, \pi)$, $C(x, \pi)$, and $D(x, \pi)$ are $2 \times 2$ matrices.
We can take an open neighborhood $U$ of $(0, 0) \in \R^4 \times \mathcal{L}(\R^4, \R^4)$ so that the matrix $D(x, \pi)$ is invertible for any $(x, \pi) \in U$.
By multiplying the matrix
$\prn{\begin{smallmatrix}
I & O \\
-D^{-1}(x, \pi) C(x, \pi) & I
\end{smallmatrix}}$
to $d(G + \pi)_x$ from the right, we obtain the matrix
$\prn{\begin{smallmatrix}
A(x, \pi) - B(x, \pi) D^{-1}(x, \pi) C(x, \pi) & B(x, \pi) \\
O & D(x, \pi)
\end{smallmatrix}}$.
Put $E(x, \pi) = A(x, \pi) - B(x, \pi) D^{-1}(x, \pi) C(x, \pi)$.
The matrix $E(0, 0)$ is the zero matrix, and
\begin{align*}
\left. \frac{\partial E(x, \pi)}{\partial x_1} \right|_{(x, \pi) = (0, 0)} =
\begin{pmatrix}
4 & 2 \\
0 & 0
\end{pmatrix},&\quad
\left. \frac{\partial E(x, \pi)}{\partial x_2} \right|_{(x, \pi) = (0, 0)} =
\begin{pmatrix}
0 & 0 \\
2 & 4
\end{pmatrix}, \\
\left. \frac{\partial E(x, \pi)}{\partial x_3} \right|_{(x, \pi) = (0, 0)} =
\begin{pmatrix}
0 & 0 \\
2 & 1
\end{pmatrix},&\quad
\left. \frac{\partial E(x, \pi)}{\partial x_4} \right|_{(x, \pi) = (0, 0)} =
\begin{pmatrix}
1 & 2 \\
0 & 0
\end{pmatrix}
\end{align*}
hold.
These equalities imply that the rank of the Jacobi matrix of $E$ with respect to $x$ is $4$. 
Therefore, applying the implicit function theorem we can obtain $\hat{x}: V \to \R^4$, where $V$ is an open neighborhood of $0 \in \mathcal{L}(\R^4, \R^4)$, such that the equality $E \prn{\hat{x}(\pi), \pi} = O$ holds for any $\pi \in V$.
Note that $\hat{x}(\pi)$ is continuous with respect to $\pi$.
Then, we can define a continuous mapping $\coker d(G + \cdot)_{\hat{x}(\cdot)}: V \to Gr (2, \R^4)$ where $Gr(2, \R^4)$ is all $2$-dimensional linear subspaces of $\R^4$.
Since $\coker dG_0$ intersects with the interior of $\Delta^3$, so does $\coker d(G + \pi)_{\hat{x}(\pi)}$ if $\pi$ is sufficiently small.
This proves that $\hat{x}(\pi)$ is a Pareto solution to $G + \pi$ with corank $2$ differential for a sufficiently small $\pi$.
\end{remark}

\begin{remark}\label{rem:Gamma}
Let $f = (f_1, \dots, f_m): \R^n \to \R^m$ be a strongly convex $C^r$--mapping $(2 \le r \le \infty)$.
We can deduce from \cref{T:generic_multi} that the mapping $\sum_{i = 1}^m w_i (f_i + \pi_i)$ is strongly convex for any $(w, \pi) \in \Delta^{m - 1}\times \mathcal{L}(\R^n, \R^m)$, where $\pi = (\pi_1, \dots, \pi_m)$.
By the same argument as in \cref{S:Topology Paretoset}, we can define a mapping $\Gamma: \Delta^{m - 1} \times \mathcal{L}(\R^n, \R^m) \to \R^n$ as follows:
\[
    \Gamma(w, \pi) = \arg\min_{x \in \R^n} \prn{\sum_{i = 1}^m w_i (f_i + \pi_i)(x)}. 
\]
Then the mapping $\Gamma$ is continuous, and thus, for any $\varepsilon > 0$ there exists an open neighborhood $U$ of $0 \in \mathcal{L}(\R^n, \R^m)$ satisfying the following inequality for any $(w, \pi) \in \Delta^{m - 1} \times U$:
\[
    \norm{\Gamma(w, \pi) - \Gamma(w, 0)} < \varepsilon.
\]
This inequality implies that the Pareto set of a linearly perturbed mapping is close to that of the original one. 
For the proof of the statement here, see \cref{A:the proof for remark}.
\end{remark}

\section{Applications}\label{sec:applications}
As we have seen, strongly convex problems have a variety of desirable properties which make their Pareto sets easy to understand.
Although lots of practical problems are \emph{not} necessarily strongly convex, we can apply suitable ``structure-preserving'' transformations to some of these problems so that they become strongly convex.
In this section, we will give several examples of such problems.

\subsection{Location problems}\label{sec:flp}
One of the most traditional examples is the location problem, which requires to find the best place $x \in \R^n$ for a facility so that the weighted sum $\sum_{i = 1}^m w_i \norm{x - p_i}$ (for given $w \in \Delta^{m - 1}$) of distances from demand points $p_1, \dots, p_m \in \R^n$ is minimized.
Its multiobjective version~\cite{Kuhn1967} is the following problem:\footnote{While Kuhn~\cite{Kuhn1967} originally considered a planar case ($n=2$), we consider the problem in general dimension.}
\begin{equation}\label{eqn:flp-mop}
\begin{split}
\text{minimize } & f(x)   = (f_1(x), \dots, f_m(x)) \text{ subject to } x \in \R^n\\
\text{where }    & f_i(x) = \norm{x - p_i} \quad(i = 1, \dots, m).
\end{split}
\end{equation}
The mapping $f$ is called a \emph{distance mapping}~\cite{Ichiki2013}, which is not differentiable.
Each $f_i$ is convex but not strongly convex, and thus so is the problem~\cref{eqn:flp-mop}.

Let us consider the transformation of the target $T: [0, \infty)^m \to [0, \infty)^m$ defined by $T(y_1, \dots, y_m) =(y_1^2, \dots, y_m^2)$, which preserves the Pareto ordering of $[0, \infty)^m$.
We have a transformed problem
\begin{equation}\label{eqn:flp-scmop}
\text{minimize } T \circ f(x) \text{ subject to } x \in \R^n.
\end{equation}
The mapping $T \circ f$ (called a \emph{distance-squared mapping}~\cite{Ichiki2013}) is differentiable and strongly convex, in particular the problem \cref{eqn:flp-scmop} is strongly convex.
Since $T$ preserves the Pareto ordering, the Pareto sets of \cref{eqn:flp-mop,eqn:flp-scmop} are identical and the Pareto fronts are homeomorphic.

For the original problem \cref{eqn:flp-mop}, there is a weight with which the weighted sum scalarization has non-unique solutions (e.g.~the problem minimizing $\sum_{i = 1}^m w_i \norm{x - p_i}$ with $w_1 = w_2 = 1/2$, $w_3 = \dots = w_m = 0$ has solutions $t p_1 + (1 - t) p_2$ for $t \in [0, 1]$), in particular we cannot define a mapping $x^*$ given in \cref{S:Topology Paretoset}.
On the other hand, for the transformed problem \cref{eqn:flp-scmop}, every scalarized problem has a unique solution and the entire Pareto set consists of such elements by the assertion 1 of \cref{T:Pareto_convex}.
It is further easy to verify that the corank of $d(T \circ f)_x$ is $1$ for any $x \in X^*(T \circ f)$, provided that $n \ge m - 1$ and $p_1, \dots, p_m$ are in general position.
Thus, the assertion 2 of \cref{T:Pareto_convex} guarantees that the problem \cref{eqn:flp-scmop} is ($C^\infty$--)simplicial.
Since $T$ preserves the Pareto ordering, one can easily see that the problem \cref{eqn:flp-mop} is also ($C^0$--)simplicial.

\subsection{Phenotypic divergence model}\label{sec:pdm}
Another example minimizing distances from points arises in evolutionary biology.
Let $A_i$ be a symmetric, positive definite matrix of size $n$ and $p_i \in \R^n$ ($i = 1, \dots, m$).
Shoval~et~al.~\cite{Shoval2012} provided a model for describing phenotypic divergence of species, which is an extension of the location problem:
\begin{equation}\label{eqn:pdm-mop}
\begin{split}
\text{minimize } f(x)   &= (f_1(x), \dots, f_m(x)) \text{ subject to } x \in \R^n\\
\text{where }    f_i(x) &= \norm{A_i (x - p_i)} \quad (i = 1, \dots, m)
\end{split}
\end{equation}
As before, the problem minimizing \cref{eqn:pdm-mop} is convex but not strongly convex.
We can again apply the transformation $T$ used in the previous subsection and obtain
\begin{equation}\label{eqn:pdm-scmop}
\text{minimize } T \circ f(x) \text{ subject to } x \in \R^n.
\end{equation}
Since affine transformations of the source space preserve strong convexity of a problem, each component of $T \circ f$ (and thus the problem \cref{eqn:pdm-scmop}) is strongly convex.
Applying the assertion 1 of \cref{T:Pareto_convex} we can conclude that both of the problems \cref{eqn:pdm-mop,eqn:pdm-scmop} are weakly simplicial.
In order to further show that these problems are simplicial, we have to check the corank condition in the assertion 2 of \cref{T:Pareto_convex}, which would be a hard task, even if the demand points $p_1, \dots, p_m$ are in general position.
Indeed, problems appearing in \cref{S:example} are special cases of the problems we are dealing with here.
As discussed in \cref{S:example}, generality of the configuration of demand points does not necessarily imply the corank condition.

\subsection{Ridge regression}
The ridge regression~\cite{Hoerl1970} can be reformulated as a multiobjective strongly convex problem.
Let us consider a linear regression model
\[
    y = \theta_1 x_1 + \theta_2 x_2 + \dots + \theta_p x_p + \varepsilon
\]
where $x_1, \dots, x_p$ are predictor variables, $y$ is a response variable, $\varepsilon$ is a Gaussian random variable expressing noise, and $\theta_1, \dots, \theta_p$ are the predictors' coefficients to be estimated from observations.
Given $n$ observations of $x_1, \dots, x_p$ and $y$, which are denoted by $\overline{x}_{i1},\ldots, \overline{x}_{ip}$ and $\overline{y}_i$ ($i=1,\ldots,n$), an observation matrix and a response vector are formed as
\[
    \overline{X} = \begin{pmatrix}
        \overline{x}_{11} & \dots & \overline{x}_{1p}\\
        \vdots & \ddots & \vdots\\
        \overline{x}_{n1} & \dots & \overline{x}_{np}
    \end{pmatrix},\quad
    \overline{y} = \begin{pmatrix}
    \overline{y}_1\\
    \vdots\\
    \overline{y}_n
    \end{pmatrix}.
\]
The ridge regressor is the solution to the following problem:
\begin{equation}\label{eqn:ridge}
\text{minimize } g^\lambda(\theta) = \norm{\overline{X} \theta - \overline{y}}^2 + \lambda \norm{\theta}^2 \text{ subject to } \theta \in \R^p
\end{equation}
where $\norm{\cdot}$ is the Euclidean norm and $\lambda$ is a positive number predetermined by users.
To obtain a good regressor, users have to find an appropriate value of $\lambda$ by repeatedly solving the problem \cref{eqn:ridge} with various candidates of $\lambda$.

We consider the following multiobjective reformulation of the \cref{eqn:ridge}:
\begin{equation}\label{eqn:ridge-mop}
\begin{split}
    \text{minimize } & f^\mu(\theta)   = (f_1^\mu(\theta), f_2(\theta)) \text{ subject to } \theta \in \R^p\\
    \text{where }    & f_1^\mu(\theta) = \norm{\overline{X}\theta - \overline{y}}^2 + \mu \norm{\theta}^2 \quad (\mu > 0),\\                  & f_2(\theta) = \norm{\theta}^2.
\end{split}
\end{equation}
Notice that $\norm{\overline{X}\theta - \overline{y}}^2$ is convex but not ensured to be strongly convex.
We thus add $\mu \norm{\theta}^2$ to guarantee strong convexity of $f_1^\mu$.
By \cref{T:main theorem,T:Pareto_convex}, this problem is weakly simplicial and the mapping
\begin{equation}\label{eqn:ridge-sop}
    \theta^*(w) = \arg\min_\theta \prn{w_1 f_1^\mu(\theta) + w_2 f_2(\theta)}
\end{equation}
is well-defined and continuous on $\Delta^1$, satisfying $\theta^*(\Delta_I) = X^*(f^\mu_I)$ for all $I \subseteq \set{1, 2}$.

Note that for $w=(w_1,w_2)\in \Delta^1- \set{(0,1)}$, the point $\theta^*(w)$ is the minimizer of the function $g^{\lambda(w)}$, where $\lambda(w)= \mu + \frac{w_2}{w_1}$. 
In particular, we can obtain the solutions to the original problems \cref{eqn:ridge} for any hyper-parameter $\lambda\geq \mu$ by solving the multiobjective problem \cref{eqn:ridge-mop}.
Since the problem \cref{eqn:ridge-mop} is weakly simplicial, the mapping $\theta^*: \Delta^1 \to \R^p$ in \cref{eqn:ridge-sop} can be approximated by a B\'ezier simplex with small samples (see~\cite[Section 4]{Tanaka2019}).

\begin{remark}
As the ridge regression problem is a special case of $\ell_p$--regularized regression problems, it is quite natural to expect that one can apply the same idea to solve various sparse modeling problems, including the lasso \cite{Tibshirani1996}, the group lasso~\cite{Yuan2006}, the fused lasso \cite{Tibshirani2005}, the smooth lasso \cite{Hebiri2011}, and the elastic net \cite{Zou2005}.
For example, the group lasso~\cite{Yuan2006}
\begin{equation*}\label{eqn:group-lasso}
\text{minimize } \norm{\overline{X} \theta - \overline{y}}^2 + \lambda \sum_{i = 1}^m \norm{\theta_{(i)}} \text{ subject to } \theta \in \R^p
\end{equation*}
is reformulated as
\begin{equation*}\label{eqn:group-lasso-mop}
\begin{split}
    \text{minimize } & f(\theta)   = (f_0(\theta), \dots, f_m(\theta)) \text{ subject to } \theta \in \R^p\\
    \text{where }    & f_0(\theta) = \norm{\overline{X}\theta - \overline{y}}^2,\\
                     & f_i(\theta) = \norm{\theta_{(i)}}^2 \quad (i = 1, \dots, m).
\end{split}
\end{equation*}
Here, $\theta_{(i)}$ is a coefficient vector corresponding to the $i$-th group of variables.
Each group-wise regularization term is squared to be a $C^2$--function without changing the Pareto ordering.
Note that the original formulation uses the single regularization constant $\lambda$ for all the groups but our reformulation makes us possible to use different regularization constants for different groups and investigate their consequences.
Unfortunately, the functions $f_0,\ldots,f_m$ appearing in the reformulated problem are not always strongly convex.
In order to apply our technique, we have to take other strongly convex functions $\tilde{f}_0,\ldots,\tilde{f}_m$ approximating to the original ones (say, $\tilde{f}_i = f_i +\mu \norm{\theta}^2$ for small $\mu >0$) so that the resulting Pareto set and front are close to those for the original problem.
The issue arising here will be addressed in a forthcoming project.
\end{remark}

\section{Conclusions}\label{sec:conclusions}
In this paper, we have shown that $C^r$--strongly convex problems are $C^{r - 1}$--weakly simplicial.
We have further proved that they are $C^{r - 1}$--simplicial under some mild assumption on the coranks of the differentials of the objective mappings.
The example given after the proof has illustrated the necessity of the corank assumption.

We have also shown that one can always make any strongly convex problem satisfy the corank assumption by a generic linear perturbation, provided that the dimension of the decision space is sufficiently larger than that of the objective space.
Note that this theorem would not hold without the assumption on the dimension pair.
We have proved that a small linear perturbation does not change the Pareto set considerably.

While lots of multiobjective optimization problems appearing in practice are not strongly convex, we have demonstrated that several examples of such problems can be reduced to strongly convex problems via transformations preserving the Pareto ordering and the topology.
The location problems, a phenotypic divergence model, and the ridge regression have been reformulated into multiobjective strongly convex problems and shown to be (weakly) simplicial.

We plan to extend the theorems to those for $C^1$--mappings.
To do this, we will require different techniques since one cannot define the Hessian matrices for $C^1$--mappings.
Another interesting research project is to find a transformation that makes problems strongly convex without causing substantial changes of the Pareto set.

\section*{Acknowledgements}
K.~H. was supported by JSPS KAKENHI Grant Number JP17K14194.
S.~I. was supported by JSPS KAKENHI Grant Number JP19J00650.
Y.~K. was supported by JSPS KAKENHI Grant Number JP16J02200.
H.~T. was supported by JSPS KAKENHI Grant Number JP19K03484 and JST PRESTO Grant Number JPMJPR16E8.
The authors were supported by RIKEN Center for Advanced Intelligence Project.
This work is based on the discussions at 2017 IMI Joint Use Research Program, Short-term Joint Research ``Visualization of solution sets in many-objective optimization via vector-valued stratified Morse theory'' and was supported by the Research Institute for Mathematical Sciences, a Joint Usage/Research Center located in Kyoto University.

\bibliographystyle{plain}
\bibliography{references}
\appendix
\section{The effect of linear perturbations on the Pareto sets}\label{A:the proof for remark}
In this appendix we will show the following statement mentioned in \cref{rem:Gamma}: 

\begin{proposition}\label{T:para generic}
The mapping $\Gamma$ defined in \cref{rem:Gamma} is continuous. 
Moreover, for any $\varepsilon > 0$, there exists an open neighborhood $U$ of $0 \in \mathcal{L}(\R^n, \R^m)$ satisfying the following inequality for any $(w, \pi)\in \Delta^{m - 1} \times U$:
\[
    \norm{\Gamma(w, \pi) - \Gamma(w, 0)} < \varepsilon.
\]
\end{proposition}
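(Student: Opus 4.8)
The plan is to characterize $\Gamma$ as the implicit solution of a first-order optimality condition, exactly as in the proof of assertion~1 of \cref{T:Pareto_convex}, now carrying the perturbation $\pi$ as an extra parameter, and then to read off the estimate from a strong-convexity modulus that is \emph{uniform} over the compact simplex. First I would prove continuity of $\Gamma$. On $\Delta^{m - 1}_{\delta'} \times \mathcal{L}(\R^n, \R^m) \times \R^n$ consider the $C^{r - 1}$--mapping $G(w, \pi, x) = \sum_{k = 1}^m w_k\prn{(d f_k)_x + \pi_k}$; by \cref{T:generic_multi} the perturbed mapping $f + \pi$ is strongly convex, so \cref{T:convex_property} identifies $\Gamma$ with the implicit solution of $G(w, \pi, x) = 0$. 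Because each $\pi_k$ is linear, the partial differential $\partial G/\partial x$ equals $\sum_{k = 1}^m w_k H(f_k)_x$, which is independent of $\pi$ and, since $\sum_k w_k = 1$ forces some weight to be positive, regular by \cref{T:criterion storngly convex Hessian}. Applying the implicit function theorem with $(w, \pi)$ as parameters and patching the local $C^{r - 1}$--solutions precisely as in the proof of assertion~1 (using uniqueness of the minimizer, \cref{T:minimum}), I conclude that $\Gamma$ is of class $C^{r - 1}$, in particular continuous.

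For the inequality I would argue directly from strong convexity rather than from continuity. By \cref{T:criterion storngly convex Hessian} there are constants $\beta_i > 0$ with $H(f_i)_x \ge \beta_i I$ for all $x$; set $\beta = \min_i \beta_i$. Since $w_i \ge 0$ and $\sum_i w_i = 1$, the Hessian of $g^w_\pi := \sum_{i = 1}^m w_i(f_i + \pi_i)$ satisfies $\sum_i w_i H(f_i)_x \ge \beta I$ for \emph{every} $w \in \Delta^{m - 1}$ and every $\pi$, so $g^w_\pi$ is $\beta$--strongly convex with a modulus independent of $(w, \pi)$. Integrating this bound along the segment between two points yields the monotonicity estimate $\gen{d(g^w_\pi)_x - d(g^w_\pi)_y,\, x - y} \ge \beta \norm{x - y}^2$, and specializing $y = \Gamma(w, \pi)$, where $d(g^w_\pi)$ vanishes, gives $\norm{d(g^w_\pi)_x} \ge \beta \norm{x - \Gamma(w, \pi)}$. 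Evaluating at $x = \Gamma(w, 0)$ and using $d(g^w_0)_{\Gamma(w, 0)} = 0$, so that $d(g^w_\pi)_{\Gamma(w, 0)} = \sum_i w_i \pi_i$, I obtain
\[
    \beta\, \norm{\Gamma(w, \pi) - \Gamma(w, 0)} \le \norm{\sum_{i = 1}^m w_i \pi_i} \le \max_i \norm{\pi_i} \le \norm{\pi}.
\]
As the right-hand side does not involve $w$, the choice $U = \Set{\pi | \norm{\pi} < \beta \varepsilon}$ proves the claim, and in fact gives a uniform Lipschitz bound $\norm{\Gamma(w, \pi) - \Gamma(w, 0)} \le \beta^{-1} \norm{\pi}$. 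Once continuity is in hand one could instead deduce the estimate from the tube lemma, since $(w, \pi) \mapsto \norm{\Gamma(w, \pi) - \Gamma(w, 0)}$ is continuous and vanishes on the compact slice $\Delta^{m - 1} \times \Set{0}$, so the open set on which it is $< \varepsilon$ contains a full tube $\Delta^{m - 1} \times U$.

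The point to get right is the \emph{uniformity in $w$}: a priori the strong-convexity modulus of $g^w_\pi$ could degenerate as $w$ approaches a face of $\Delta^{m - 1}$, forcing the neighborhood $U$ to shrink with $w$ and breaking the estimate. What rescues the argument is that the eigenvalue bounds in \cref{T:criterion storngly convex Hessian} are global in $x$ and the weights satisfy $\sum_i w_i = 1$, so a single $\beta$ works across the entire simplex and all of $\R^n$ at once; this is also what lets me avoid any appeal to compactness of $\R^n$. The remaining care is routine, the patching of local implicit solutions near $\partial \Delta^{m - 1}$ being handled exactly as in the proof of assertion~1 of \cref{T:Pareto_convex}.
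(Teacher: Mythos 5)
Your proof is correct. The continuity argument coincides with the paper's: both realize $\Gamma$ as the implicit solution of the first-order condition $\sum_{k}w_k\prn{(df_k)_x+\pi_k}=0$ and invoke the positive definiteness of $\sum_k w_k H(f_k)_x$ (which is unchanged by the linear perturbation) to apply the implicit function theorem with $(w,\pi)$ as parameters. Your derivation of the estimate, however, takes a genuinely different route. The paper deduces the inequality softly: having established continuity, it covers the compact slice $\Delta^{m-1}\times\Set{0}$ by finitely many product neighborhoods $V_i\times U_i$ on which the displacement is $<\varepsilon$ and intersects the $U_i$ --- precisely the tube-lemma argument you mention as a fallback at the end. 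You instead extract a uniform modulus $\beta=\min_i\beta_i$ from \cref{T:criterion storngly convex Hessian}, observe that it survives convex combinations over $\Delta^{m-1}$ and linear perturbations, and convert the monotonicity inequality $\gen{d(g^w_\pi)_x-d(g^w_\pi)_y,\,x-y}\ge\beta\norm{x-y}^2$, specialized at the two minimizers, into the explicit Lipschitz bound $\norm{\Gamma(w,\pi)-\Gamma(w,0)}\le\beta^{-1}\max_i\norm{\pi_i}$. This is strictly more informative than what the proposition asserts: it gives a quantitative, $w$-independent modulus of continuity in $\pi$, and the second assertion then no longer relies on compactness of $\Delta^{m-1}$ at all. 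Your closing paragraph correctly isolates the one point where such an argument could fail --- degeneration of the strong-convexity modulus near $\partial\Delta^{m-1}$ --- and why it does not here: the eigenvalue bound of \cref{T:criterion storngly convex Hessian} is global in $x$. The only cosmetic caveat is that the last step $\max_i\norm{\pi_i}\le\norm{\pi}$ depends on the chosen norm on $\mathcal{L}(\R^n,\R^m)$; since all norms on this finite-dimensional space are equivalent, this affects only the constant in the definition of $U$, not the conclusion.
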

\begin{proof}[Proof of \cref{T:para generic}]
Let $(\widetilde{w}, \widetilde{\pi}) \in \Delta^{m - 1} \times \mathcal{L}(\R^n, \R^m)$ be an arbitrary element.
We will show that $\Gamma$ is continuous at the point $(\widetilde{w}, \widetilde{\pi})$. 

Now, let $\gamma: \R^{m - 1} \times \mathcal{L}(\R^n, \R^m) \times \R^n \to \R^n$ be the mapping defined by
\[
    \gamma(w_1, \ldots, w_{m - 1}, \pi, x) = d\prn{\sum_{i = 1}^{m - 1} w_i (f_i + \pi_i) + \prn{1 - \sum_{i = 1}^{m - 1} w_i}(f_m + \pi_m)}_x.
\]
Then, $\gamma$ is a $C^{r-1}$--mapping. 
Set $\widetilde{x} = \Gamma(\widetilde{w}, \widetilde{\pi})$. 
By the definition of $\Gamma$, we have
\[
    d\prn{\sum_{i = 1}^m \widetilde{w}_i (f_i + \widetilde{\pi}_i)}_{\widetilde{x}} = 0,
\]
where $\widetilde{w} = (\widetilde{w}_1, \ldots, \widetilde{w}_m)$ and $\widetilde{\pi} = (\widetilde{\pi}_1, \ldots, \widetilde{\pi}_m)$.
Since $\widetilde{w}_m = 1 - \sum_{i = 1}^{m - 1} \widetilde{w}_i$, we get $\gamma\prn{p(\widetilde{w}), \widetilde{\pi}, \widetilde{x}} = 0$, where $p: \Delta^{m - 1} \to \R^{m - 1}$ is the mapping defined by
\[
    p(w_1, \ldots, w_m) = (w_1, \ldots, w_{m - 1}).
\]

Let $\gamma_{(p(\widetilde{w}), \widetilde{\pi})}: \R^n \to \R^n$ be the mapping defined by $\gamma_{(p(\widetilde{w}), \widetilde{\pi})}(x) = \gamma(p(\widetilde{w}), \widetilde{\pi}, x)$.
It is easy to verify that the following equality holds:
\[
    d(\gamma_{(p(\widetilde{w}), \widetilde{\pi})})_{\widetilde{x}} = H \prn{\sum_{i = 1}^m \widetilde{w}_i (f_i + \widetilde{\pi}_i)}_{\widetilde{x}},
\]
where $H \prn{\sum_{i = 1}^m \widetilde{w}_i (f_i + \widetilde{\pi}_i)}_{\widetilde{x}}$ is the Hessian matrix of $\sum_{i = 1}^m \widetilde{w}_i (f_i + \widetilde{\pi}_i)$ at $\widetilde{x}$.
Since the function $\sum_{i = 1}^m \widetilde{w}_i (f_i + \widetilde{\pi}_i)$ is strongly convex, we can deduce from \cref{T:criterion storngly convex Hessian} that the determinant $\det d(\gamma_{(p(\widetilde{w}), \widetilde{\pi})})_{\widetilde{x}}$ is not equal to $0$.
Therefore, by the implicit function theorem, there exist an open neighborhood $W$ of $(p(\widetilde{w}), \widetilde{\pi}) \in \R^{m - 1} \times \mathcal{L}(\R^n, \R^m)$ and 
a mapping $\varphi: W \to \R^n$ satisfying
\[
    \gamma(z, \pi, \varphi(z,\pi)) = 0
\]
for any $(z, \pi) \in W \subseteq \R^{m - 1} \times \mathcal{L}(\R^n, \R^m)$.
Thus the mapping $\Gamma$ is continuous at $(\widetilde{w},\widetilde{\pi})$ since $\Gamma(w, \pi)$ is equal to $\varphi(p(w), \pi)$ for any $(w, \pi) \in (p \times \mathrm{id})^{-1}(W)$.

Let $\varepsilon > 0$ be an arbitrary real number. 
Since $\Gamma$ is continuous and $\Delta^{m-1}$ is compact, 
there exist an open covering $\set{V_i}_{i=1}^l$ of $\Delta^{m-1}$ 
and open neighborhoods 
$U_1, \ldots, U_l$ of 
$0 \in \mathcal{L}(\R^n,\R^m)$ 
satisfying 
\[
    \norm{\Gamma(w, \pi) - \Gamma(w, 0)} < \varepsilon
\]
for any $(w, \pi) \in V_i \times U_i$ $(1 \leq i \leq l)$.
The intersection $U = \bigcap_{i = 1}^l U_i$ is an open neighborhood of $0 \in \mathcal{L}(\R^n, \R^m)$ with the desired property.  
\end{proof}
\end{document}